\newtheorem{theorem}{Theorem}[section]
\newtheorem{lemma}[theorem]{Lemma}
\newtheorem{proposition}[theorem]{Proposition}
\newtheorem{remark}[theorem]{Remark}
\title[Inverse fractional conductivity problem]{Partial data stability for the inverse fractional conductivity problem}
\keywords{Fractional Laplacian, Calderón problem, conductivity equation}
\subjclass[2020]{Primary 35R30; secondary 26A33, 42B37}
\author{Giovanni Covi}
\address{Department of Mathematics and Statistics, University of Helsinki, Finland}
\email{\texttt{giovanni.covi@helsinki.fi}} 
\author{Antti Kujanpää}
\address{Computational Engineering, School of Engineering Sciences,
Lappeenranta-Lahti University of Technology LUT, Lappeenranta, Finland}
\email{antti.kujanpaa@lut.fi}
\author{Jesse Railo}
\address{Computational Engineering, School of Engineering Sciences,
Lappeenranta-Lahti University of Technology LUT, Lappeenranta, Finland \& Department of Mathematics, Stanford University, Stanford, CA, USA}
\email{jesse.railo@lut.fi; railo@stanford.edu}
\date{\today}
\newcommand{\R}{{\mathbb R}}
\newcommand{\vev}[1]{\left\langle#1\right\rangle}
\newcommand{\norm}[1]{\lVert #1 \rVert}
\newcommand{\abs}[1]{\left\lvert #1 \right\rvert}
\newcommand{\ip}[2]{\left\langle #1,#2 \right\rangle}
\DeclareMathOperator{\Div}{div} 
\DeclareMathOperator{\supp}{supp} 
\begin{document}

\begin{abstract}
The classical Calderón problem with partial data is known to be log-log stable in some special cases, but even the uniqueness problem is open in general. We study the partial data stability of an analogous inverse fractional conductivity problem on bounded smooth domains. Using the fractional Liouville reduction, we obtain a log-log stability estimate when the conductivities a priori agree in the measurement set and their difference has compact support. In the case in which the conductivities are assumed to agree a priori in the whole exterior of the domain, we obtain a shaper logarithmic stability estimate.
\end{abstract}

\maketitle


\section{Introduction}
We study the stability properties of the fractional Calder\'on problem, and obtain logarithmic estimates from partial exterior data. This result improves the earlier state of the art (see \cite{doi:10.1137/22M1533542}), in the sense that the measurements are obtained only from a bounded subset of the exterior domain.

The interest in stability estimates for inverse problems arises from their real-life use in imaging techniques from the fields of medicine, engineering, and more. Due to the typical ill-posedness of many inverse problems, small measuring errors often produce large errors in reconstruction. The stability estimates give a way to theoretically quantify and control such errors. A prototypical example of such behaviour is given by the classical Calder\'on problem in electrical impedance tomography (see \cite{C80}, \cite{GU30years}), which asks to reconstruct the electrical conductivity within an object from measurements of voltage and currents on its boundary. The general, anisotropic problem is still wide open at the time of writing, but many steps forward have been made in numerous directions. We cite a few of them in order to give an overview. The fundamental uniqueness result has been obtained by Sylvester and Uhlmann in dimension $n \geq 3$, using the so called complex geometrical optics solutions \cite{SU87}. The case $n=2$ was studied by Astala and Päivärinta using complex analytical tools \cite{AP06}. A reconstruction method has been established by Nachman \cite{Na88}. In their fundamental works, Alessandrini \cite{Alessandrini-stability} and Mandache \cite{MandacheInstability} were able to show that logarithmic stability holds for the classical Calder\'on problem, and that this result is in fact optimal. The instability result has been improved and greatly generalized by Koch, Rüland and Salo (\cite{KochRulandSaloInstability}, see also the recent works \cite{CaroDSFerreiraRuiz, CaroSaloAnisotropic} for stability with partial data). In the case of piecewise constant conductivities, the stability result has been improved from logarithmic to Lipschitz \cite{AlessandriniVessellaLipschitz}, but the constants may grow rapidly when the number of pieces increases. The anisotropic Calderón problem has furhter been studied in \cite{AstalaPaivarintaLassasAnisotropic,FKS-Anisotropic}.

In this work, we study a fractional, nonlocal analogue of the classical Calder\'on problem, following a line of study initiated by Ghosh, Salo and Uhlmann in their seminal work \cite{GSU20}. In particular, we are interested in the stability properties of the fractional Calder\'on problem in its conductivity formulation. It is worth to notice that there exist other formulations as well, which are in general not equivalent to ours (see \cite{ghosh2021calderon} for a formulation via the heat semigroups, and  \cite{CGRU23} for one via the Caffarelli-Silvestre extension). Given $s\in(0,1)$, the exterior value problem for the fractional conductivity equation is
\begin{equation}\begin{cases}\label{nonlocal-conductivity}
    \mbox{div}_s (\Theta_\gamma\nabla^su)&=0 \quad \mbox{ in } \Omega, \\ u&=f \quad \mbox{ in } \Omega_e ,
\end{cases}\end{equation}
where $\Omega_e\vcentcolon=\R^n\setminus\overline{\Omega}$ is the exterior domain, and $\Theta_{\gamma}(x,y)\vcentcolon =\gamma^{1/2}(x)\gamma^{1/2}(y)Id$ is the conductivity matrix. The nonlocal operators $\mbox{div}_s, \nabla^s$ (defined in detail in Section \ref{sec: preliminaries}) are called fractional divergence and fractional gradient, and they are nonlocal counterparts of the differential operators $\mbox{div}, \nabla$, with which they share the properties $$(\mbox{div}_s)^*=\nabla^s, \qquad \mbox{div}_s\nabla^s = (-\Delta)^s.$$ The fractional conductivity operator is a map $$\Div_s(\Theta_\gamma \nabla^s) : H^s(\R^n)\rightarrow H^{-s}(\R^n).$$ We say that $u\in H^s(\R^n)$ is a weak solution of \eqref{nonlocal-conductivity} if $u-f \in \widetilde{H}^s(\Omega)$ and
\begin{equation}\label{eq:generalNonlocalOperators}
    B_\gamma(u,\phi):=\frac{C_{n,s}}{2}\int_{\R^{2n}} \frac{\gamma^{1/2}(x)\gamma^{1/2}(y)}{\abs{x-y}^{n+2s}} (u(x)-u(y))(\phi(x)-\phi(y))\,dxdy =0
\end{equation} 
holds for all $\phi \in C_c^\infty(\Omega)$. All the necessary Bessel potential spaces and details are defined in Section \ref{sec: preliminaries}. One can show that for all $f\in X\vcentcolon = H^s(\R^n)/\widetilde{H}^s(\Omega)$ there exists a unique weak solution $u_f\in H^s(\R^n)$ of the fractional conductivity equation \eqref{nonlocal-conductivity}. With this in mind, we define the exterior DN map $\Lambda_{\gamma}\colon X\to X^*$ by 
        \[
        \begin{split}
            \langle \Lambda_{\gamma}f,g\rangle \vcentcolon =B_{\gamma}(u_f,g).
        \end{split}
        \]
In the inverse problem for the fractional conductivity equation, one is asked to recover the conductivity $\gamma$ from (partial knowledge of) the map $\Lambda_\gamma$. This formulation of the fractional Calder\'on problem has been object of intensive study; we refer to \cite{covi-survey} for the most recent references. In particular, the uniqueness question for the fractional Schrödinger equation was considered already in \cite{GSU20}, and later extended in \cite{GLX-calderon-nonlocal-elliptic-operators, CLR18-frac-cald-drift,LL-fractional-semilinear-problems,CMR20,CMRU22,C21} to the cases of perturbations of nonlinear, higher order, or quasilocal kind, to name a few. A general theory for nonlocal elliptic equations was developed in \cite{RS-fractional-calderon-low-regularity-stability,RZ2022unboundedFracCald}. Global uniqueness for the fractional conductivity equation was obtained in \cite{RGZ2022GlobalUniqueness} (see also the earlier works \cite{covi2019inverse-frac-cond, RZ2022unboundedFracCald}) by means of the  fractional Liouville reduction (see Section \ref{sec: preliminaries}). The uniqueness question for low regularity was studied in \cite{RZ2022LowReg} by means of the UCP results of \cite{KRZ2022Biharm}. Many counterexamples to uniqueness were obtained for special geometries \cite{RZ2022counterexamples,RZ2022LowReg}. The problem of stability for the fractional conductivity equation has been the object of \cite{doi:10.1137/22M1533542}, where a logarithmic estimate was obtained under smallness assumptions for the difference of the DN maps in the case of complete data.  

\subsection{Main results}

Our first main result is a logarithmic stability estimate, which we obtain in the case of partial data and under the assumption that the unknown conductivities coincide a priori in the exterior set $\Omega_e$.

\begin{theorem}\label{thm: stability estimate partial I} 
Let $\Omega\subset \mathbb R^n$ be a bounded, smooth domain, let $0<s<\min(1,n/2)$, and assume that $W_1,W_2 \Subset \Omega_e$ are nonempty open sets. Let $\gamma_1,\gamma_2\in L^\infty(\R^n)$ be such that 
\begin{enumerate}[(i)]
        \item $\gamma_1=\gamma_2$ in $\Omega_e$,\label{item: agree on Ext} 
        \item\label{item: main assumption 1_first} $\gamma_0\leq \gamma_j(x) \leq \gamma_0^{-1}$, for $j=1,2$ and some $\gamma_0\in (0,1)$,
    \end{enumerate} 
and assume that there exist $\varepsilon, C_1>0$ such that the background deviations $m_1,m_2$ verify
\begin{enumerate}[(i)]
\setcounter{enumi}{2}
        \item\label{item: main assumption 2I0} $m_1,m_2\in  H^{2s+\varepsilon,\frac{n}{s}}(\R^n)$, with 
        \begin{equation}
        \label{eq: main bound 1I0}
            \|m_i\|_{H^{2s+\varepsilon,\frac{n}{s}}(\R^n)}\leq C_1, \qquad \mbox{for } i=1,2.
        \end{equation}
\end{enumerate}
Then 
    \begin{equation}
    \label{eq: stability estimate I}
       \| \gamma_1 - \gamma_2 \|_{H^s (\Omega)} \leq \omega(\|\Lambda_{\gamma_1} - \Lambda_{\gamma_2}\|_{\widetilde{H}^s(W_1) \to (\widetilde{H}^s(W_2))^*}),
    \end{equation}
     where $\omega$ is a logarithmic modulus of continuity satisfying
    \[
     \omega(t) \leq C|\log t|^{-\sigma},\quad\text{for}\quad 0 < t\leq \lambda,
    \]
    for three constants $\sigma,\lambda,C>0$ depending only on $s,\varepsilon,n,\Omega, W_1,W_2, C_1$ and $\gamma_0$.
\end{theorem}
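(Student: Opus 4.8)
The plan is to reduce the conductivity problem to a fractional Schr\"odinger problem by means of the fractional Liouville reduction, to prove a quantitative stability estimate at the level of the associated potentials, and finally to transfer the conclusion back to the conductivities. Writing $m_j=\gamma_j^{1/2}-1$ for the background deviations, the Liouville reduction attaches to each $\gamma_j$ a potential $q_j=-\gamma_j^{-1/2}(-\Delta)^s m_j$ and shows that $u$ solves the conductivity equation for $\gamma_j$ exactly when $\gamma_j^{1/2}u$ solves $((-\Delta)^s+q_j)w=0$; the regularity bounds \eqref{eq: main bound 1I0} ensure that the $q_j$ live in a suitable dual space and that the reduction is valid. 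The decisive consequences of the exterior-agreement hypothesis are that $m_1=m_2$ throughout $\Omega_e$, so that $m_1-m_2\in\widetilde H^s(\Omega)$, and that $\gamma_1^{1/2}=\gamma_2^{1/2}$ on $W_1\cup W_2$. Because the conductivity and Schr\"odinger DN maps are conjugate through multiplication by $\gamma_j^{1/2}$, and these multipliers coincide on the measurement sets, I obtain a comparison
\[
\|\Lambda_{q_1}-\Lambda_{q_2}\|_{\widetilde H^s(W_1)\to(\widetilde H^s(W_2))^*}\lesssim \|\Lambda_{\gamma_1}-\Lambda_{\gamma_2}\|_{\widetilde H^s(W_1)\to(\widetilde H^s(W_2))^*}.
\]

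Next I would use the Alessandrini-type identity
\[
\langle(\Lambda_{q_1}-\Lambda_{q_2})f_1,f_2\rangle=\int_\Omega (q_1-q_2)\,u_1u_2\,dx,
\]
valid for exterior data $f_j\in\widetilde H^s(W_j)$ with $u_j$ the corresponding $q_j$-solutions. The task is to reconstruct $q_1-q_2$, equivalently $(-\Delta)^s(m_1-m_2)$, from this bilinear pairing. To this end I need solutions $u_1,u_2$, with data supported in the bounded sets $W_1,W_2$, whose product $u_1u_2$ approximates a prescribed function on $\Omega$; taking $u_2\approx 1$ and $u_1\approx\phi$ for a test function $\phi$ reduces matters to approximation by a single family of solutions.

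The core of the argument, and the step I expect to be hardest, is a quantitative Runge approximation property with bounded partial exterior data: for every target $v$ and every $\delta>0$ there exists $f\in\widetilde H^s(W)$ with solution $u_f$ satisfying $\|u_f-v\|_{L^2(\Omega)}\le\delta$ and the quantitative bound $\|f\|_{\widetilde H^s(W)}\le C\exp(C\delta^{-\mu})$. Qualitative density is a consequence of the strong unique continuation property of $(-\Delta)^s$ together with a Hahn--Banach duality argument, while the exponential cost in the quantitative version comes from combining a singular value analysis of the measurement operator with quantitative unique continuation (propagation of smallness) estimates for the fractional Laplacian, which must transport information from the bounded set $W$ across $\Omega_e$ and into $\Omega$. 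Keeping explicit track of the constants in this genuinely partial and nonlocal setting is the most delicate point.

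Finally I would insert the quantitative approximation into the Alessandrini identity. Choosing the approximation error equal to $\delta$ and using the a priori bounds \eqref{eq: main bound 1I0} to control the error term gives, for a weak norm of the difference,
\[
\|q_1-q_2\|_{*}\lesssim \exp(C\delta^{-\mu})\,\|\Lambda_{\gamma_1}-\Lambda_{\gamma_2}\|+C_1\,\delta.
\]
Optimizing over $\delta$ balances the two terms and yields a logarithmic bound $\|q_1-q_2\|_{*}\lesssim|\log\|\Lambda_{\gamma_1}-\Lambda_{\gamma_2}\||^{-1/\mu}$. Interpolating this weak estimate against the stronger a priori control coming from the extra $\varepsilon$ of regularity in $H^{2s+\varepsilon,n/s}(\R^n)$, and then converting back from potentials to conductivities via elliptic regularity for $(-\Delta)^s$ acting on the compactly supported $m_1-m_2$ together with the multiplier identity $\gamma_1-\gamma_2=(m_1-m_2)(2+m_1+m_2)$ and the $L^\infty$ bounds, returns the desired estimate \eqref{eq: stability estimate I} with a logarithmic modulus $\omega(t)\le C|\log t|^{-\sigma}$.
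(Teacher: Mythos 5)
Your skeleton---Liouville reduction, comparison of the conductivity and Schr\"odinger DN maps using the exterior agreement, stability at the level of the potentials, conversion back to the conductivities---coincides with the paper's, and your first step is essentially Lemma \ref{step1} (modulo the multiplication lemmas needed to know that $w\mapsto\gamma_i^{\pm1/2}w$ preserves $\widetilde H^s(W_j)$, which the paper proves as Lemmas \ref{lemmaeq10991}--\ref{lemmaeq10993}). The decisive difference, and the main gap, is the middle step: the paper simply invokes the R\"uland--Salo partial data stability result (Theorem \ref{thm: ruland salo proposition}) after verifying its hypothesis $\|q_j\|_{H^{\delta,\frac{n}{2s}}(\Omega)}\leq M$ in Proposition \ref{step2} via the Kato--Ponce inequality and a Triebel--Lizorkin embedding---a verification you omit entirely, even though without uniform bounds on the $q_j$ the modulus $\omega$ cannot depend only on the stated constants. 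You instead propose to re-derive the Schr\"odinger stability from scratch through an Alessandrini identity and a quantitative Runge approximation with exponential cost; but the quantitative Runge/propagation-of-smallness step that you yourself flag as hardest \emph{is} the content of the cited theorem, and you give only a description of it, not a proof, so as written the argument is not complete. Moreover, your Alessandrini identity is false in this nonlocal setting: although $\gamma_1=\gamma_2$ in $\Omega_e$ forces $m_1=m_2$ there, the fractional Laplacian does not preserve supports, so $q_1-q_2=-\gamma_1^{-1/2}(-\Delta)^s m_1+\gamma_2^{-1/2}(-\Delta)^s m_2$ does \emph{not} vanish in $\Omega_e$. The correct identity is $\langle(\Lambda_{q_1}-\Lambda_{q_2})f_1,f_2\rangle=\int_{\R^n}(q_1-q_2)u_1u_2\,dx$, and the exterior contribution $\int_{\Omega_e}(q_1-q_2)f_1f_2\,dx$, supported where the data overlap, must be kept; indeed, in the paper's proof of Theorem \ref{thm: stability estimate partial II} this exterior term is precisely the quantity being estimated, not a term that disappears.

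Your final conversion step also does not close as sketched. You want to pass from a weak bound on $q_1-q_2$ to $(-\Delta)^s(m_1-m_2)$ and then invert $(-\Delta)^s$ on the compactly supported $m_1-m_2$; but $(-\Delta)^s(m_1-m_2)=-\gamma_1^{1/2}(q_1-q_2)-(\gamma_1^{1/2}-\gamma_2^{1/2})q_2$, so the right-hand side contains the unknown difference multiplied by $q_2$, and ``elliptic regularity for $(-\Delta)^s$'' cannot absorb this term without an unwarranted smallness assumption on $q_2$ in a multiplier norm. The paper's Lemma \ref{step3} avoids the issue structurally: $v=\gamma_1^{-1/2}(\gamma_1^{1/2}-\gamma_2^{1/2})\in\widetilde H^s(\Omega)$ solves $\Div_s(\Theta_{\gamma_1}\nabla^s v)=\gamma_1^{1/2}\gamma_2^{1/2}(q_1-q_2)$ in $\Omega$ with zero exterior value, so the troublesome term is built into the uniformly elliptic conductivity operator, and the elliptic estimate together with duality and Sobolev embedding gives $\|v\|_{H^s(\Omega)}\lesssim\|q_1-q_2\|_{L^{\frac{n}{2s}}(\Omega)}$; your factorization $\gamma_1-\gamma_2=(\gamma_1^{1/2}+\gamma_2^{1/2})(\gamma_1^{1/2}-\gamma_2^{1/2})$ then matches the paper's first inequality there, while your concluding ``interpolation against the a priori regularity'' is vague and, on the paper's route, unnecessary. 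To repair your argument: cite Theorem \ref{thm: ruland salo proposition} (or genuinely prove the quantitative unique continuation input), establish the $H^{\delta,\frac{n}{2s}}$ bounds on the $q_j$ as in Proposition \ref{step2}, keep the exterior term in the integral identity, and replace the inversion of $(-\Delta)^s$ by the source problem for the $\gamma_1$-conductivity operator.
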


The proof of this result is based on the fractional Liouville reduction, which allows us to restate the problem in terms of the fractional Schrödinger equation, and on the related result by R\"uland and Salo (see \cite{RS-fractional-calderon-low-regularity-stability}). However, the reduction is not immediately straightforward, and requires some regularity considerations. \\

For the case in which the two unknown conductivities are not known to concide a priori in the exterior, we have obtained the following log-log stability result. Here we assume that the difference of the conductivities is supported in a compact set $\Sigma$.

\begin{theorem}\label{thm: stability estimate partial II}
Let $\Omega\subset \mathbb R^n$ be a bounded, smooth domain, let $0<s<\min(1,n/2)$, and assume that $\Sigma \subset \R^n$ is a compact set and $W \subset \R^n$ is a nonempty bounded  domain such that $\overline W\cap ( \overline{\Omega} \cup \Sigma ) = \emptyset$. Assume that the conductivities $\gamma_1,\gamma_2\in L^\infty(\R^n)$ are such that 
\begin{enumerate}[(i)]
        \item  $\supp(\gamma_1-\gamma_2) = \Sigma$, \label{item: compact supp assumption}
        \item\label{item: main assumption 1_two} $\gamma_0\leq \gamma_j(x) \leq \gamma_0^{-1}$, for $j=1,2$ and some $\gamma_0\in (0,1)$,
    \end{enumerate} 
and assume that there exist $\varepsilon, C_1>0$ such that the background deviations $m_1,m_2$ verify
\begin{enumerate}[(i)]
\setcounter{enumi}{2}
        \item\label{item: main assumption 2I00} $m_1,m_2\in  H^{2s+\varepsilon,\frac{n}{s}}(\R^n)$, with 
        \begin{equation}
        \label{eq: main bound 1I00}
            \|m_i\|_{H^{2s+\varepsilon,\frac{n}{s}}(\R^n)}\leq C_1, \qquad \mbox{for } i=1,2.
        \end{equation}
\end{enumerate}    
   Then for all $1\leq p < \frac{2n}{n-2s}$ we have
    \begin{equation}
    \label{eq: stability estimate II}
       \| \gamma_1 - \gamma_2 \|_{L^{p}(\R^n)} \leq \omega(\|\Lambda_{\gamma_1} - \Lambda_{\gamma_2}\|_{\widetilde{H}^s(W) \to (\widetilde{H}^s(W))^*}),
    \end{equation} 
    where $\omega(t)$ is a log-logarithmic modulus of continuity satisfying
    \[
     \omega(t) \leq C_1'|\log(C_0'|\log t|^{-\sigma_0})|^{-\sigma_1},\quad\text{for}\quad 0 < t\leq \lambda,
    \]
    for constants $\sigma_0, \sigma_1, C'_0 ,C'_1,\lambda>0$ depending only on $s,p,W,\Sigma,\varepsilon,n,\Omega, C_1$ and $\gamma_0$.
\end{theorem}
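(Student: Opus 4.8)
The plan is to pass to the fractional Schr\"odinger picture via the fractional Liouville reduction, apply the partial data stability of R\"uland and Salo \cite{RS-fractional-calderon-low-regularity-stability} to the reduced potentials, and then recover the conductivity difference by a quantitative unique continuation argument whose logarithmic cost is what degrades the logarithmic stability of Theorem~\ref{thm: stability estimate partial I} to log--log. Throughout write $m_j=\gamma_j^{1/2}-1$ for the background deviations and let $q_j=-\gamma_j^{-1/2}\fraclaplace(\gamma_j^{1/2})$ be the associated Liouville potentials, so that $u$ solves \eqref{nonlocal-conductivity} for $\gamma_j$ if and only if $\gamma_j^{1/2}u$ solves $(\fraclaplace+q_j)v=0$ in $\Omega$; the regularity hypothesis \eqref{item: main assumption 2I00} is exactly what places the $q_j$ in the admissible class for the reduced problem and makes the reduction bicontinuous. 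Since $\gamma_0\le\gamma_j\le\gamma_0^{-1}$ we have $|\gamma_1-\gamma_2|\le 2\gamma_0^{-1/2}|w|$ pointwise with $w:=m_1-m_2=\gamma_1^{1/2}-\gamma_2^{1/2}$, so it suffices to bound $\|w\|_{L^p(\R^n)}$; note that \eqref{item: compact supp assumption} forces $\supp w\subset\Sigma$, and that \eqref{eq: main bound 1I00} gives $\|w\|_{H^s(\R^n)}\lesssim C_1$ because $w$ is compactly supported.

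First I would make the reduction quantitative: using that $\gamma_1=\gamma_2$ on $W$ (since $\overline W\cap\Sigma=\emptyset$), the conjugations by $\gamma_j^{1/2}$ agree on the measurement set, and one shows that the difference of Schr\"odinger DN maps on $W$ is dominated by $\|\Lambda_{\gamma_1}-\Lambda_{\gamma_2}\|_{\widetilde H^s(W)\to(\widetilde H^s(W))^*}$. Feeding this into the R\"uland--Salo estimate for the fractional Schr\"odinger equation on $\Omega$ with exterior data on $W$ yields a logarithmic bound
\[
\|q_1-q_2\|_{H^{-s}(\Omega)}\ \le\ C\,\big|\log\|\Lambda_{\gamma_1}-\Lambda_{\gamma_2}\|\big|^{-\sigma_0}=:\delta,
\]
with $\sigma_0,C$ of the type appearing in Theorem~\ref{thm: stability estimate partial I}. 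Because the $q_j$ are globally defined rather than supported in $\Omega$, this step requires checking that the invoked form of the R\"uland--Salo estimate tolerates the exterior tails of the potentials, which are themselves controlled through \eqref{eq: main bound 1I00}.

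The decisive step converts the control of $q_1-q_2$ on $\Omega$ into control of the compactly supported $w$. Subtracting the two Liouville identities $\fraclaplace m_j=-\gamma_j^{1/2}q_j$ gives $\fraclaplace w=-\gamma_1^{1/2}q_1+\gamma_2^{1/2}q_2$, which on $\R^n\setminus\Sigma$, where $\gamma_1=\gamma_2=:\gamma$, collapses to $\fraclaplace w=-\gamma^{1/2}(q_1-q_2)$. Choosing a nonempty open set $\mathcal O\Subset\Omega\setminus\overline\Sigma$ we thus have $w\equiv 0$ on $\mathcal O$ while the regularity of $\gamma^{1/2}$ and the previous step give $\|\fraclaplace w\|_{H^{-s}(\mathcal O)}\lesssim\delta$. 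The logarithmically quantitative unique continuation property for $\fraclaplace$ (the quantitative form of the Ghosh--Salo--Uhlmann UCP used in \cite{RS-fractional-calderon-low-regularity-stability}) then bounds $\|w\|_{L^2(\R^n)}$ by $E\,\eta(\delta/E)$ with a logarithmic modulus $\eta$ and $E\lesssim C_1$; this is the source of the outer logarithm. Finally, the fractional Sobolev embedding $H^s(\R^n)\hookrightarrow L^{2n/(n-2s)}(\R^n)$ bounds $\|w\|_{L^{2n/(n-2s)}}\lesssim C_1$, and interpolating this against the smallness of $\|w\|_{L^2}$ controls $\|w\|_{L^p}$ for every $p<\tfrac{2n}{n-2s}$ by a positive power of $\|w\|_{L^2}$. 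Composing the two logarithmic moduli yields \eqref{eq: stability estimate II}.

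The main obstacle is precisely the nonlocality in this last reduction: since $\fraclaplace$ is nonlocal, $q_1-q_2$ is not supported in $\Sigma$ even though $\gamma_1-\gamma_2$ is, so $w$ cannot be read off algebraically from the R\"uland--Salo output and must be recovered by quantitative unique continuation across the gap between $\Sigma$ and the region where $q_1-q_2$ is controlled. Securing a usable open set $\mathcal O$ on which $w$ vanishes while $q_1-q_2$ is controlled (this is where the separation $\overline W\cap(\overline\Omega\cup\Sigma)=\emptyset$ and the interplay between $\Omega$ and $\Sigma$ enter) and making the unique continuation quantitative is the crux; it is this transfer, rather than the Liouville reduction or the R\"uland--Salo input, that forces the weaker log--log modulus and the $L^p$ topology in place of the $H^s(\Omega)$ estimate of Theorem~\ref{thm: stability estimate partial I}.
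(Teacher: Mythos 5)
Your overall architecture (Liouville reduction, R\"uland--Salo stability for the reduced potentials, a quantitative UCP step producing the outer logarithm, Sobolev embedding/interpolation to land in $L^p$) matches the paper's, and you correctly identify the quantitative unique continuation as the source of the log--log loss. But the decisive transfer step has a genuine gap: you place the UCP observation set $\mathcal O \Subset \Omega\setminus\overline\Sigma$, and nothing in the hypotheses guarantees that this set is nonempty. The theorem only assumes $\overline W\cap(\overline\Omega\cup\Sigma)=\emptyset$; it is perfectly allowed that $\overline\Omega\subseteq\Sigma$ --- indeed $\Sigma=\overline\Omega$ is singled out in the paper as the special case recovering the geometry of Theorem \ref{thm: stability estimate partial I}, and $\Sigma$ may also be a large compact set strictly containing $\overline\Omega$. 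In those cases there is no open subset of $\Omega$ on which $w=\gamma_1^{1/2}-\gamma_2^{1/2}$ vanishes, and your argument collapses. The structural problem is that your only quantitative control of $q_1-q_2$ is the R\"uland--Salo bound on $\Omega$, while the only region where the hypotheses guarantee $w\equiv 0$ is near $W$, which lies \emph{outside} $\Omega$; you never control $q_1-q_2$ there, so the UCP cannot be run where it is needed.

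The missing idea --- and what the paper actually does --- is to observe $(-\Delta)^s v$, $v:=\gamma_1^{1/2}-\gamma_2^{1/2}$, on a Lipschitz subdomain $W'\Subset W$, where $v$ vanishes because $\overline W\cap\Sigma=\emptyset$ and where $\|(-\Delta)^s v\|_{H^{-s}(W')}\lesssim\|q_1-q_2\|_{H^{-s}(W')}$, and then to control $q_1-q_2$ \emph{on $W'$} rather than on $\Omega$ via the Alessandrini identity: for $f_1,f_2\in C_c^\infty(W)$,
\[
\int_{W}(q_1-q_2)f_1f_2\,dx=\langle(\Lambda_{q_1}-\Lambda_{q_2})f_1,f_2\rangle-\int_\Omega(q_1-q_2)u_1u_2\,dx,
\]
where the boundary term is bounded by the DN-map difference and the interior term by the R\"uland--Salo output $\|q_1-q_2\|_{L^{n/2s}(\Omega)}$ (Theorem \ref{thm: ruland salo proposition} combined with Proposition \ref{thm: partial data reduction}). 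This gives a logarithmic bound on the Sobolev multiplier norm of $q_1-q_2$ over $W$, hence on $\|q_1-q_2\|_{H^{-s}(W')}$ by a cutoff, after which Proposition \ref{prop:GRSU-single-quantitative-ucp} is applied with observation set $W'$ and with $v$ supported in a Lipschitz neighborhood $\Sigma'\supset\Sigma$ whose closure avoids $W$. Your version of the transfer works only under the unstated extra hypothesis $\Omega\setminus\overline\Sigma\neq\emptyset$, so as written the proof fails precisely in the central cases the theorem covers. (Your preliminary reduction $\|\Lambda_{q_1}-\Lambda_{q_2}\|_{W}\lesssim\|\Lambda_{\gamma_1}-\Lambda_{\gamma_2}\|_{W}$ is sound, since the cross terms in the conjugation identity vanish identically for data supported in $W$ where $\gamma_1=\gamma_2$; the paper instead invokes Proposition \ref{thm: partial data reduction}, which costs only a harmless power-loss modulus. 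The gap is thus confined to, but fatal in, the UCP transfer step.)
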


    Observe that condition \ref{item: compact supp assumption} from Theorem \ref{thm: stability estimate partial II} generalizes condition  \ref{item: agree on Ext} from Theorem \ref{thm: stability estimate partial I}, which corresponds to the special case $\Sigma\subseteq\overline\Omega$. However, in both cases we have $\gamma_1=\gamma_2$ in $W$. In the proof of the result above, the log-log estimate is obtained by combination of the cited logarithmic estimate from R\"uland, Salo with the result \cite[Proposition 6.1]{GRSU-fractional-calderon-single-measurement}, which estimates a function in $\Omega$ in terms of its fractional Laplacian in the exterior.

\subsection{Organization of the article} The rest of the article is organized as follows. Section 2 is dedicated to the  preliminaries, including the definitions of several Bessel potential spaces, the fractional Schr\"odinger equation, and the main focus of our study: the fractional conductivity equation. The fractional Liouville reduction, which establishes a connection between the two, is also discussed in Section 2. Section 3 contains the proof of Theorem \ref{thm: stability estimate partial I}, which is obtained via several steps, each of which is stated as a separated lemma. Finally, in Section 4 we provide the proof of Theorem \ref{thm: stability estimate partial II}.

\subsection*{Acknowledgements} This work was supported by the Research Council of Finland through the Flagship of Advanced Mathematics for Sensing, Imaging and Modelling (decision numbers 359182 and 359183). G.C. was supported by an Alexander-von-Humboldt fellowship and by the Research Council Finland (CoE in Inverse Modelling and Imaging and FAME flagship, grants 353091 and 359208). J.R. was supported by Emil Aaltonen Foundation, Fulbright Finland Foundation (ASLA-Fulbright Research Grant for Junior Scholars 2024--2025), and Jenny and Antti Wihuri Foundation.

\section{Preliminaries}\label{sec: preliminaries}

\subsection{Bessel potential spaces}
Denote by $   \mathcal{F} $ and $\mathcal{F}^{-1}$ the Fourier and inverse Fourier transforms in $\R^n$, respectively. 
For $1\leq p < \infty$ and $s \in \R $, we define the Bessel potential space  $H^{s,p} ( \R^n ) $ as the set 
\[
H^{s,p} ( \R^n )  := \{ u \in L^p(\R^n) : \mathcal{F}^{-1} \left(  (1+ | \xi |^2 )^{\frac{s}{2}} \mathcal{F} u \right) \in  L^p(\R^n) \} ,
\]
equipped with the norm 
\[
\| u \|_{H^{s,p} ( \R^n ) }  :=  \left\|  \mathcal{F}^{-1} \left(  (1+ | \xi |^2 )^{\frac{s}{2}} \mathcal{F}   u\right)  \right\|_{L^p (\R^n)}.
\]
It is well known that $( H^{s,p} ( \R^n ) , \| \cdot \|_{H^{s,p} ( \R^n ) }  )$ is a Banach space. 
Denote $H^{s} ( \R^n )  := H^{s,2} ( \R^n ) $. 
For open $U \subset \R^n$, we define 
\[
H^{s,p} ( U )  := \{ u |_U : u \in H^{s,p} ( \R^n ) \}, \quad H^{s} ( U ) := H^{s,2} ( U ),
\]
and associate it with the quotient norm 
\[
\| u \|_{H^{s,p} ( U ) } := \inf \{ \| v \|_{ H^{s,p} ( \R^n )  } : v \in H^{s,p} ( \R^n ), \ v|_U = u \}.
\]
The closure of $C_c^\infty (U) $ in $H^{s,p}(\R^n)$ shall be denoted by $\widetilde{H}^{s,p} ( U )$. Again, $\widetilde{H}^{s} ( U ) : =  \widetilde{H}^{s,2} ( U )$.
The space $H^{s}(U)$ coincides (see \cite[Theorem 3.3]{MR3620760}) with the dual space 
$(\widetilde{H}^{-s}(U))^*$.

\subsection{The fractional Schrödinger equation}

Let $r\in\mathbb R^+$. The fractional Laplacian $(-\Delta)^r  : H^{s,p} ( \R^n ) \to H^{s-2r,p} ( \R^n ) $, $1\leq p $, $r \in \R$ is defined by 
\[
(-\Delta)^r u := \mathcal{F}^{-1} \left( |\xi|^{2r} \mathcal{F} u \right). 
\]
For $r = 1$ it coincides with the standard Laplacian $-\Delta = - \sum_{j=1}^n \partial_j^2 $, which is a local operator. For all $r\in\mathbb Z^+$, the operator $(-\Delta)^r$ is also local; however, for $r \in \R^+\setminus \mathbb Z$ the fractional Laplacian does not preserve supports, i.e. it is a nonlocal operator. \\

Let $\Omega \subset \R^n$ be a bounded  smooth domain, and denote by $\Omega_e$ the exterior domain $ \R^n \setminus \overline\Omega$.  
Let $0< s < 1$ and consider solutions $u \in H^s (\R^n)$ of the fractional Schrödinger equation
\[
    (-\Delta)^s u + q  u = 0\quad\text{in}\quad  \Omega,
\]
where the potential $q$ belongs to $L^{\frac{n}{2s}}( \R^n )$. 
The function $u \in H^s(\R^n)$ is said to be a weak solution of this equation if  $B_q(u,\phi ) = 0$ holds for all $ \phi \in \widetilde{H}^s ( \Omega)$,  where  
    \[
       B_q(u,v)\vcentcolon =\int_{\R^n}(-\Delta)^{s/2}u\,(-\Delta)^{s/2}v\,dx+\langle qu,v\rangle, \quad u,v \in H^s(\R^n).
     \]
Assume that $0$ is not a Dirichlet eigenvalue of $(-\Delta)^s  + q $, i.e. the only  solution to  
\begin{align}
\begin{cases}
(-\Delta)^s u + q  u = 0, &   \quad \text{in} \quad \Omega, \\ 
u  = 0, & \quad \text{in} \quad \Omega_e ,
\end{cases}\end{align} 
belonging to $H^s (\R^n)$ is $u \equiv 0$. 
Under these conditions, the exterior value problem 
\begin{align}\label{oo11}
\begin{cases}
(-\Delta)^s u + q  u = 0, & \quad \text{in} \quad \Omega,   \\ 
u |_{\Omega_e} = f, &  \quad \text{in} \quad \Omega_e,  
\end{cases} 
\end{align}
has a unique solution $u =u_f \in H^s (\R^n)$ for every $f \in H^s(\R^n)$. We refer to \cite{RS-fractional-calderon-low-regularity-stability,CMRU22} for more details. 
We recall the following result:
\begin{lemma}[{\cite[Lemma 2.4]{GSU20}}]  \label{Ghosh-Salo-Uhlmann-lemma_2_4}
Let $\Omega \subset \R^n$ be a bounded open set, let $0<s<1$, and assume that $q \in L^\infty (\Omega)$ is such that 
that $0$ is not a Dirichlet eigenvalue of $(-\Delta)^s  + q $. Then there is a bounded, linear, self-adjoint operator
\[
\Lambda_q : X \to X^*,  \quad \quad  X:= H^s( \R^n) / \widetilde{H}^s(\Omega), 
\]
defined by
\[
\langle \Lambda_q [f_1] , [f_2]\rangle = B_q(u_1, f_2),
\]
where $u_1\in H^s(\R^n)$ is the weak solution of \eqref{oo11} with exterior value $f_1$.
\end{lemma}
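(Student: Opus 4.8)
The plan is to verify in turn that the formula $\langle \Lambda_q[f_1],[f_2]\rangle := B_q(u_1,f_2)$ descends to a well-defined bilinear pairing on the quotient $X\times X$, that this pairing is bounded, and that it is symmetric. The starting point is the well-posedness of \eqref{oo11} recalled above: since $0$ is not a Dirichlet eigenvalue, every $f\in H^s(\R^n)$ admits a unique weak solution $u_f\in H^s(\R^n)$ with $u_f-f\in\widetilde{H}^s(\Omega)$, and the solution map obeys the a priori bound $\|u_f\|_{H^s(\R^n)}\leq C\|f\|_{H^s(\R^n)}$.

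First I would record that the solution operator factors through the quotient. If $f$ and $f'$ represent the same class, i.e.\ $f-f'\in\widetilde{H}^s(\Omega)$, then $u_f$ and $u_{f'}$ both solve the equation and their difference lies in $\widetilde{H}^s(\Omega)$; by uniqueness $u_f=u_{f'}$, so the solution $u_{[f]}$ is unambiguously defined on classes. Combining this with the a priori bound and the definition of the quotient norm as an infimum over representatives yields $\|u_{[f]}\|_{H^s(\R^n)}\leq C\|[f]\|_X$, while linearity of $[f]\mapsto u_{[f]}$ follows from uniqueness and the linearity of the equation.

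Next I would exploit the weak-solution identity to obtain simultaneously well-definedness in the second slot, symmetry, and boundedness. Since $u_1$ is a weak solution, $B_q(u_1,\phi)=0$ for every $\phi\in\widetilde{H}^s(\Omega)$; hence $B_q(u_1,f_2)$ is unchanged when $f_2$ is replaced by another representative of $[f_2]$, and moreover $B_q(u_1,f_2)=B_q(u_1,u_2)$, where $u_2=u_{[f_2]}$, because $f_2-u_2\in\widetilde{H}^s(\Omega)$. The form $B_q$ is manifestly symmetric in its two arguments, so $\langle\Lambda_q[f_1],[f_2]\rangle=B_q(u_1,u_2)=B_q(u_2,u_1)=\langle\Lambda_q[f_2],[f_1]\rangle$, which is the claimed self-adjointness. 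Boundedness of $B_q$ on $H^s(\R^n)\times H^s(\R^n)$ is elementary: the first term is estimated by $\|(-\Delta)^{s/2}u\|_{L^2}\|(-\Delta)^{s/2}v\|_{L^2}\leq\|u\|_{H^s}\|v\|_{H^s}$, while the potential term is controlled by $\|q\|_{L^\infty(\Omega)}\|u\|_{L^2(\Omega)}\|v\|_{L^2(\Omega)}$ via Hölder's inequality. Inserting the solution estimate gives $|\langle\Lambda_q[f_1],[f_2]\rangle|=|B_q(u_1,u_2)|\leq C\|u_1\|_{H^s}\|u_2\|_{H^s}\leq C\|[f_1]\|_X\|[f_2]\|_X$, so $\Lambda_q$ is a bounded operator $X\to X^*$.

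The only genuinely delicate point is the bookkeeping with the quotient: one must check that replacing representatives in either argument leaves the value unchanged, and the cleanest way to see this is the symmetrization $B_q(u_1,f_2)=B_q(u_1,u_2)$, which packages both invariances together with the self-adjointness into a single symmetric expression. Everything else follows directly from the well-posedness of the forward problem and the continuity of the energy form.
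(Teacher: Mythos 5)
Your proof is correct and follows essentially the same route as the paper's source for this result (the paper itself only cites \cite[Lemma 2.4]{GSU20} without reproving it): well-definedness on the quotient via uniqueness of solutions and the vanishing of $B_q(u_1,\cdot)$ on $\widetilde{H}^s(\Omega)$, the symmetrization $B_q(u_1,f_2)=B_q(u_1,u_2)$ yielding self-adjointness, and boundedness from the continuity of $B_q$ on $H^s(\R^n)\times H^s(\R^n)$ combined with the solution estimate and the infimum characterization of the quotient norm. The only ingredient you use beyond what the paper states explicitly is the a priori bound $\|u_f\|_{H^s(\R^n)}\leq C\|f\|_{H^s(\R^n)}$, which is legitimate since it follows from the variational (G\r{a}rding--Fredholm) well-posedness theory the paper recalls just before the lemma.
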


We call 
$ 
\Lambda_q : X \to X^* 
$
the exterior DN map for the fractional Schrödinger equation. We will typically use it in the situation when $f_j\in \widetilde H^s(W_j)$ for $j=1,2$, where $W_j\Subset\Omega_e$. In this case, we use the norm \begin{align}\norm{A}_{W_1, W_2} &:= \norm{A}_{\widetilde{H}^s(W_1) \to (\widetilde{H}^{s}(W_2))^*} \\
&:=\sup\{\,\abs{\langle Au_1,u_2 \rangle} \,;\, \norm{u_j}_{H^s(\R^n)}=1,u_j \in C_c^\infty(W_j)\,\}.\end{align}
For the sake of brevity, we also define $\norm{A}_{W} := \norm{A}_{W, W}$.

\subsection{The fractional conductivity equation}

The fractional gradient $\nabla^s : H^s(\R^n) \to L^2( \R^{2n} ; \R^n)$ of order $s$ is defined by
\[
\nabla^s u(x,y) = \sqrt{ \frac{C_{n,s} }{2} }  \frac{u(x) - u(y) }{ |x-y|^{n/2 + s + 1} } (x-y) ,
\]
where $C_{n,s}$ is a strictly positive constant. Let $\text{div}_s$ be the adjoint of $\nabla^s$. 
The fractional Laplacian is related to these operators via 
\[
\text{div}_s \nabla^s = (-\Delta)^s. 
\]
Let $\gamma \in L^\infty( \R^n)$ be a scalar function such that $\gamma(x)>\gamma_0>0$ for almost all $x\in\mathbb R^n$, which we call a conductivity, and define the conductivity matrix $\Theta_\gamma$ by 
\[
\Theta_\gamma(x,y) :=  \gamma^{1/2} (x) \gamma^{1/2} (y) I_{n \times n}, \quad x,y \in \R^n.
\]
We define 
\[
B_\gamma ( u,v ) = \int_{\R^{2n}} \Theta_\gamma \nabla^s u \cdot \nabla^s v dx dy, \quad u,v \in H^s(\R^n).
\]
The function $u\in H^s( \R^n)$ is said to be a weak solution of the fractional conductivity equation $\text{div}_s ( \Theta_\gamma  \nabla^s u )= 0$ in $\Omega$ if $B_\gamma ( u,\phi  ) = 0$ holds for every  $\phi \in \widetilde{H}^s ( \Omega)$.

The exterior value problem for the fractional conductivity equation reads
\begin{align}\label{cc11}
\begin{cases}
\text{div}_s ( \Theta_\gamma  \nabla^s u )= 0,   &\quad \text{in} \quad \Omega,  \\
u = f,  &\quad \text{in} \quad \Omega_e. 
\end{cases}
\end{align}
Here the exterior value condition $u = f$ in $\Omega_e$ means that $u-f \in  \widetilde{H}^s( \Omega) $.
%
Define the background deviation $m_\gamma $ by  
\[
m_\gamma  := \gamma^{1/2} - 1.
\]
We define the background deviation in this way in order to set the constant $1$ boundary condition for conductivities at infinity. It is not clear whether a similar theory would hold without assumptions on the integrability and regularity of $m_\gamma$. We recall the following result:
\begin{lemma}\label{lemm123} $($\cite[Lemma 8.10]{RZ2022unboundedFracCald}$)$
Let $\Omega$ be open set bounded in one direction and $0<s< \min (1,n/2)$.
Assume that $\gamma \in L^\infty(\R^n)$, $m_\gamma \in H^{2s, \frac{n}{2s}}(\R^n)$ and
$\gamma \geq \gamma_0$ for some constant $\gamma_0 >0$. Then there is a bounded linear operator 
\[
\Lambda_\gamma : X \to  X^*,  \quad \quad  X:= H^s( \R^n) / \widetilde{H}^s(\Omega), 
\]
satisfying 
\[
 \langle \Lambda_\gamma[f_1] , [f_2] \rangle = B_\gamma( u_1, f_2 )
\]
for all $f_1,f_2\in X$, where $u_1 $ solves \eqref{cc11} with exterior value $f_1$.
\end{lemma}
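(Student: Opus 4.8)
The plan is to realize $\Lambda_\gamma$ through the variational formulation of \eqref{cc11}, by showing that $B_\gamma$ is a bounded, symmetric, coercive bilinear form on the appropriate energy space and then invoking the Lax--Milgram theorem. First I would record the mapping properties of $B_\gamma$. Expanding $\nabla^s u\cdot\nabla^s v$ gives
\[
B_\gamma(u,v)=\frac{C_{n,s}}{2}\int_{\R^{2n}}\frac{\gamma^{1/2}(x)\gamma^{1/2}(y)}{|x-y|^{n+2s}}\big(u(x)-u(y)\big)\big(v(x)-v(y)\big)\,dx\,dy,
\]
which is manifestly symmetric. Since $\gamma\in L^\infty(\R^n)$ yields the bound $0<\gamma^{1/2}(x)\gamma^{1/2}(y)\leq\|\gamma\|_{L^\infty(\R^n)}$, an application of Cauchy--Schwarz together with the identity $[u]_{H^s}^2:=\frac{C_{n,s}}{2}\iint|u(x)-u(y)|^2|x-y|^{-n-2s}\,dx\,dy=\|(-\Delta)^{s/2}u\|_{L^2(\R^n)}^2$ will give boundedness of the form, $|B_\gamma(u,v)|\leq\|\gamma\|_{L^\infty(\R^n)}\|u\|_{H^s(\R^n)}\|v\|_{H^s(\R^n)}$, on $H^s(\R^n)\times H^s(\R^n)$.

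Next I would establish coercivity on the closed subspace $\widetilde H^s(\Omega)$. The pointwise lower bound $\gamma^{1/2}(x)\gamma^{1/2}(y)\geq\gamma_0$ immediately yields $B_\gamma(u,u)\geq\gamma_0\,\|(-\Delta)^{s/2}u\|_{L^2(\R^n)}^2$, and I would then upgrade this to $B_\gamma(u,u)\geq c\|u\|_{H^s(\R^n)}^2$ for all $u\in\widetilde H^s(\Omega)$ by invoking the fractional Poincaré inequality, which is available because $\Omega$ is bounded in one direction. Given $[f_1]\in X$ and any representative $f_1$, the functional $\phi\mapsto-B_\gamma(f_1,\phi)$ is bounded on $\widetilde H^s(\Omega)$, so Lax--Milgram produces a unique $w\in\widetilde H^s(\Omega)$ satisfying $B_\gamma(w,\phi)=-B_\gamma(f_1,\phi)$ for all $\phi\in\widetilde H^s(\Omega)$. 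Setting $u_1:=f_1+w\in H^s(\R^n)$ gives the unique weak solution of \eqref{cc11}; replacing $f_1$ by $f_1+\chi$ with $\chi\in\widetilde H^s(\Omega)$ only shifts $w$, so $u_1$ depends on $[f_1]$ alone and obeys the stability bound $\|u_1\|_{H^s(\R^n)}\leq C\|f_1\|_{H^s(\R^n)}$.

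Finally I would define $\langle\Lambda_\gamma[f_1],[f_2]\rangle:=B_\gamma(u_1,f_2)$ and verify the three required properties. Independence of the representative of $[f_2]$ is exactly the weak formulation $B_\gamma(u_1,\phi)=0$ for $\phi\in\widetilde H^s(\Omega)$, while independence of the representative of $[f_1]$ follows from the previous paragraph; bilinearity is inherited from $B_\gamma$ and from the linearity of the solution map $f_1\mapsto u_1$. Boundedness $|\langle\Lambda_\gamma[f_1],[f_2]\rangle|\leq C\|f_1\|_{H^s(\R^n)}\|f_2\|_{H^s(\R^n)}$ then combines the boundedness of $B_\gamma$ with the stability estimate for $u_1$, and passing to the infimum over representatives converts this into the claimed quotient-norm bound. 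I would remark that the hypothesis $m_\gamma\in H^{2s,n/(2s)}(\R^n)$ is not strictly needed for boundedness or coercivity, but is precisely what makes $\gamma^{1/2}=1+m_\gamma$ a pointwise multiplier on $H^s(\R^n)$ and renders the induced potential well defined; via the fractional Liouville reduction this identifies $\Lambda_\gamma$ with a Schrödinger DN map as in Lemma \ref{Ghosh-Salo-Uhlmann-lemma_2_4}, which is what the later arguments exploit.

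The hard part will be the coercivity step: securing the fractional Poincaré inequality in the setting where $\Omega$ is only assumed bounded in one direction, since this is where the geometry enters and where the strict positivity of $B_\gamma$ on $\widetilde H^s(\Omega)$ comes from. That positivity is also what guarantees Lax--Milgram applies and, equivalently, that $0$ is automatically excluded as a Dirichlet eigenvalue, so no separate eigenvalue hypothesis is required in the conductivity formulation.
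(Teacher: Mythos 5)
Your proposal is correct and takes essentially the same route as the source: the paper states this lemma without proof, citing \cite[Lemma 8.10]{RZ2022unboundedFracCald}, whose argument is precisely the boundedness--coercivity--Lax--Milgram scheme you describe, with coercivity on $\widetilde{H}^s(\Omega)$ supplied by the fractional Poincar\'e inequality for sets bounded in one direction and well-definedness of $\Lambda_\gamma$ on the quotient $X$ checked exactly as you do. Your closing remark is also accurate: the hypothesis $m_\gamma\in H^{2s,\frac{n}{2s}}(\R^n)$ plays no role in the well-posedness or the bound $\Lambda_\gamma\colon X\to X^*$, and enters only when the Liouville reduction identifies $\Lambda_\gamma$ with a Schr\"odinger DN map.
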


We call $\Lambda_\gamma$ the exterior DN map for the fractional conductivity equation. \\

Provided we have enough regularity (see Lemma \ref{lemma: Liouville reduction} below), the fractional conductivity equation can be rewritten as a fractional Schrödinger equation. Indeed, if $u$ is a solution to \eqref{cc11}, then the function  $v:= \gamma^{1/2} u$  solves \eqref{oo11} for the potential $ q_\gamma:= \frac{ (- \Delta)^s m_\gamma }{\gamma^{1/2}}$ and the exterior value $ \gamma^{1/2} f $. That is;
\begin{align}
\begin{cases}
(-\Delta)^s v + q_\gamma  v = 0, &  \quad  \text{in} \quad \Omega,  \\ 
v  = \gamma^{1/2} f,  &\quad   \text{in} \quad \Omega_e.
\end{cases} 
\end{align}
This technique is known as fractional Liouville reduction:

\begin{lemma}[{Liouville reduction, \cite[Lemma 3.9]{RZ2022LowReg}}]
\label{lemma: Liouville reduction}
    Let $0<s<\min(1,n/2)$. Assume that $\gamma\in L^{\infty}(\R^n)$ with conductivity matrix $\Theta_{\gamma}$ and background deviation $m_\gamma$ satisfies $\gamma(x)\geq \gamma_0>0$ and $m_\gamma\in H^{s,n/s}(\R^n)$. Let $q_\gamma:=\frac{(-\Delta)^sm_\gamma}{\gamma^{1/2}}$. 
    Then the identity
    \begin{equation}
    \label{eq: Liouville reduction}
    \begin{split}
        \langle\Theta_{\gamma}\nabla^su,\nabla^s\phi\rangle_{L^2(\R^{2n})}=&\,\langle (-\Delta)^{s/2}(\gamma^{1/2}u),(-\Delta)^{s/2}(\gamma^{1/2}\phi))\rangle_{L^2(\R^n)}\\
        &+\langle q_\gamma(\gamma^{1/2}u),(\gamma^{1/2}\phi)\rangle
    \end{split}
    \end{equation}
 holds for all $u,\phi\in H^s(\R^n)$.
\end{lemma}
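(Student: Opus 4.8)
The plan is to prove \eqref{eq: Liouville reduction} by reducing it to an elementary pointwise identity integrated against the kernel $\frac{C_{n,s}}{2}\abs{x-y}^{-n-2s}$, and then to justify all the manipulations at the given regularity by a density argument. Write $a:=\gamma^{1/2}=1+m_\gamma$. By definition the left-hand side of \eqref{eq: Liouville reduction} is the double integral of $a(x)a(y)(u(x)-u(y))(\phi(x)-\phi(y))$ against this kernel, and by the standard quadratic-form representation of the fractional Laplacian the first term on the right-hand side is the double integral of $(a(x)u(x)-a(y)u(y))(a(x)\phi(x)-a(y)\phi(y))$ against the same kernel. Hence everything reduces to comparing these two numerators.

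First I would record the algebraic identity
\begin{equation*}
\begin{split}
a(x)a(y)(u(x)-u(y))(\phi(x)-\phi(y))=&\,(a(x)u(x)-a(y)u(y))(a(x)\phi(x)-a(y)\phi(y))\\
&-(m_\gamma(x)-m_\gamma(y))(a(x)u(x)\phi(x)-a(y)u(y)\phi(y)),
\end{split}
\end{equation*}
valid for a.e. $(x,y)$, which follows by expanding both products and using $a=1+m_\gamma$. Multiplying by the kernel and integrating over $\R^{2n}$, the first term on the right reproduces $\ip{(-\Delta)^{s/2}(\gamma^{1/2}u)}{(-\Delta)^{s/2}(\gamma^{1/2}\phi)}$, while the second, rewritten through the quadratic form of $\fraclaplace$ as the pairing of $m_\gamma$ against $\gamma^{1/2}u\phi$ and then through the definition of $q_\gamma$, is exactly the potential term $\ip{q_\gamma(\gamma^{1/2}u)}{\gamma^{1/2}\phi}$. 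This yields \eqref{eq: Liouville reduction} at the formal level.

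The main obstacle is not the algebra but justifying every step at the critical regularity $m_\gamma\in H^{s,n/s}(\R^n)$. The crux is a borderline fractional Leibniz (Kato–Ponce type) estimate showing that multiplication by $\gamma^{1/2}=1+m_\gamma$ is bounded on $H^s(\R^n)$, with $\norm{m_\gamma g}_{H^s}\lesssim\norm{m_\gamma}_{H^{s,n/s}}\norm{g}_{H^s}$; this uses precisely the scaling-critical hypothesis on $m_\gamma$ and ensures $\gamma^{1/2}u,\gamma^{1/2}\phi\in H^s(\R^n)$, so that the conductivity form and the Schrödinger quadratic form are absolutely convergent. The potential term, on the other hand, is not an absolutely convergent double integral and must be read as the duality pairing $\ip{\fraclaplace m_\gamma}{\gamma^{1/2}u\phi}$; one checks it is finite because $(-\Delta)^{s/2}m_\gamma\in L^{n/s}(\R^n)$ pairs with $(-\Delta)^{s/2}(\gamma^{1/2}u\phi)\in L^{n/(n-s)}(\R^n)$, the latter controlled by combining the embedding $H^s(\R^n)\hookrightarrow L^{2n/(n-2s)}(\R^n)$ with the same product estimates. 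With these bounds the three bilinear forms are continuous on $H^s(\R^n)\times H^s(\R^n)$, and I would establish the identity first when $m_\gamma$ is additionally smooth and $u,\phi$ are Schwartz — where all integrals converge absolutely and Fubini legitimizes the term-by-term integration of the pointwise identity — and then remove the extra smoothness and extend to all $u,\phi\in H^s(\R^n)$ by density. I expect the borderline product estimate to be the delicate point, since at the endpoint exponent $n/s$ the Sobolev multiplication results sit exactly at their limit.
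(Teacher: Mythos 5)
Your proposal is correct in substance, and it follows essentially the same route as the proof of this lemma in the cited source \cite[Lemma 3.9]{RZ2022LowReg} (the paper itself gives no proof, importing the result verbatim): the same pointwise factorization identity integrated against the kernel $\frac{C_{n,s}}{2}|x-y|^{-n-2s}$, the potential term read as a duality pairing via the $L^{n/s}$--$L^{n/(n-s)}$ H\"older split rather than as an absolutely convergent double integral, and a smoothing/density step to legitimize separating the one absolutely convergent integral into two conditionally defined pieces. Your explicit recognition that the second term cannot be handled by Fubini directly at the critical regularity $m_\gamma\in H^{s,n/s}(\R^n)$ is precisely the delicate point of the original argument.

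Two fine points should be repaired. First, the multiplication bound as you literally state it, $\|m_\gamma g\|_{H^s}\lesssim \|m_\gamma\|_{H^{s,n/s}}\|g\|_{H^s}$, is false in general: $H^{s,n/s}(\R^n)$ sits at the endpoint $sp=n$ and does not embed into $L^\infty(\R^n)$, so Kato--Ponce only yields $\|m_\gamma g\|_{H^s}\lesssim \big(\|m_\gamma\|_{H^{s,n/s}}+\|m_\gamma\|_{L^\infty}\big)\|g\|_{H^s}$, using $H^s(\R^n)\hookrightarrow L^{2n/(n-2s)}(\R^n)$. This is harmless here because $\gamma\in L^\infty(\R^n)$ with $\gamma\geq\gamma_0>0$ forces $m_\gamma=\gamma^{1/2}-1\in L^\infty(\R^n)$, but that hypothesis must be invoked explicitly (the same $L^\infty$ control is needed when you mollify $m_\gamma$ in the density step, where $\|\rho_k\ast m_\gamma\|_{L^\infty}\leq\|m_\gamma\|_{L^\infty}$ saves you). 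Second, your algebra --- which is correct --- actually produces
\[
\langle\Theta_\gamma\nabla^su,\nabla^s\phi\rangle_{L^2(\R^{2n})}
=\langle(-\Delta)^{s/2}(\gamma^{1/2}u),(-\Delta)^{s/2}(\gamma^{1/2}\phi)\rangle_{L^2(\R^n)}
-\Big\langle \frac{(-\Delta)^sm_\gamma}{\gamma^{1/2}}\,(\gamma^{1/2}u),\,\gamma^{1/2}\phi\Big\rangle,
\]
so the displayed identity \eqref{eq: Liouville reduction} holds with the convention $q_\gamma=-\frac{(-\Delta)^sm_\gamma}{\gamma^{1/2}}$, which is the one used in Section 3 of the paper (and in the cited source), and not with the positive sign in the lemma's stated definition of $q_\gamma$; your closing claim that the second term ``is exactly the potential term'' silently performs this sign flip. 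The inconsistency is a typo in the statement as reproduced in the paper rather than an error in your derivation, but a complete write-up should make the sign bookkeeping explicit.
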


\section{Partial data problem I}

In this section we prove Theorem \ref{thm: stability estimate partial I}, i.e. our stability result for the partial data problem in the case that $\gamma_1 = \gamma_2$ in $\Omega_e$. When dealing with two conductivities $\gamma_i \in L^{\infty}(\R^n)$, $i=1,2$, for the sake of simplicity we will let $m_i$ and $q_i$ be shorthand for $m_{\gamma_i} = \gamma_i^{1/2} - 1 $ and $ q_{\gamma_i} = -\frac{ (- \Delta)^s m_{\gamma_i} }{\gamma_i^{1/2}} $, respectively. We begin by recalling a known result by R\"uland and Salo.

\begin{theorem}[{\cite[Theorem~1.2]{RS-fractional-calderon-low-regularity-stability}}]\label{thm: ruland salo proposition}
Let $\Omega\subset\R^n$ be a bounded smooth domain, $0<s<1$, and $W_1,W_2 \subset \Omega_e$ be non-emtpy open sets. Assume that for some $\delta, M>0$ the potentials $q_1,q_2\in H^{\delta,\frac{n}{2s}}(\R^n)$ have the bounds
\[
\| q_j \|_{H^{\delta,\frac{n}{2s}}(\Omega)}\leq M,\quad j=1,2.
\]
Suppose also that zero is not a Dirichlet eigenvalue for the exterior value problem 
\begin{equation}\label{eq: fractional schrodinger}
    (-\Delta)^s u + q_j u = 0,\quad\text{in }\Omega
\end{equation}
with $u|_{\Omega_e}=0$, for $j=1,2$. Then it holds that
\[
\| q_1 - q_2 \|_{L^{\frac{n}{2s}}(\Omega)} \leq \omega(\| \Lambda_{q_1} - \Lambda_{q_2} \|_{\widetilde{H}^s(W_1) \to (\widetilde{H}^s(W_2))^*}),
\]
where $\Lambda_{q_j}\colon X\to X^*$ with $\vev{\Lambda_{q_j}f,g}:=B_{q_j}(u_f,g)$ is the DN map related to the exterior value problem for equation \eqref{eq: fractional schrodinger}, and $\omega$ is a modulus of continuity satisfying
\[
\omega(x) \leq C|\log x|^{-\sigma}, \quad 0< x\leq 1
\]
for two constants $C,\sigma>0$ depending only on $\Omega$, $n$, $s$, $W_1$, $W_2$, $\delta$ and $M$.
\end{theorem}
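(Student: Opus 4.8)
The plan is to reduce the stability question to a quantitative unique continuation statement for the fractional Laplacian, following the scheme of Rüland and Salo. The starting point is the Alessandrini-type integral identity: if $u_1\in H^s(\R^n)$ solves $(-\Delta)^s u_1 + q_1 u_1 = 0$ in $\Omega$ with exterior data $f_1\in C_c^\infty(W_1)$, and $u_2$ solves the analogous equation for $q_2$ with data $f_2\in C_c^\infty(W_2)$, then a direct computation from the weak formulations of the two equations yields
\[
\langle (\Lambda_{q_1}-\Lambda_{q_2})f_1, f_2\rangle = \int_\Omega (q_1-q_2)\,u_1 u_2\,dx.
\]
This bounds the right-hand integral by $\|\Lambda_{q_1}-\Lambda_{q_2}\|_{W_1,W_2}\,\|f_1\|_{H^s(\R^n)}\,\|f_2\|_{H^s(\R^n)}$. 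The task then becomes to choose the data $f_1,f_2$ so that the product $u_1 u_2$ approximates an arbitrary test function paired against $q_1-q_2$, while keeping explicit quantitative control of $\|f_j\|_{H^s(\R^n)}$.

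The main engine is a quantitative Runge approximation. The qualitative version, due to Ghosh–Salo–Uhlmann, asserts that solutions with exterior data supported in $W_j$ are dense in $L^2(\Omega)$ among all solutions, a soft consequence of the strong uniqueness continuation property of $(-\Delta)^s$. To obtain stability rather than mere uniqueness I would prove the quantitative form: for each target $g$ in a suitable space and each $\varepsilon>0$, there exists $f\in C_c^\infty(W_j)$ whose solution $u_f$ satisfies $\|u_f-g\|_{L^2(\Omega)}\le\varepsilon$ at a controlled cost $\|f\|_{H^s(\R^n)}\le C\,e^{C\varepsilon^{-\mu}}$. This is established by passing to the Caffarelli–Silvestre extension, which realizes $(-\Delta)^s$ as a weighted Dirichlet-to-Neumann map for a degenerate elliptic equation on $\R^{n+1}_+$, and then applying a Carleman estimate and three-balls propagation-of-smallness inequality for the extended problem to quantify how smallness on $W_j$ propagates into $\Omega$. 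The exponential dependence of the cost on $\varepsilon$ is precisely what forces a logarithmic, rather than Hölder, modulus.

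With the quantitative approximation in hand, I would select approximating solutions $u_1,u_2$ for the two factors, substitute them into the integral identity, split $u_1 u_2$ into a target part plus an error part, and control the error using the a priori bound $\|q_j\|_{H^{\delta,n/2s}}\le M$ together with the approximation error $\varepsilon$. A duality argument, in which the a priori $H^{\delta,n/2s}$ regularity of the potentials upgrades the $L^2$-type closeness to an estimate on $\|q_1-q_2\|_{L^{n/2s}(\Omega)}$, then produces a bound of the shape $\|q_1-q_2\|_{L^{n/2s}(\Omega)}\lesssim \varepsilon + e^{C\varepsilon^{-\mu}}\|\Lambda_{q_1}-\Lambda_{q_2}\|_{W_1,W_2}$. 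Optimizing the free parameter $\varepsilon$ against the measurement error $t=\|\Lambda_{q_1}-\Lambda_{q_2}\|_{W_1,W_2}$, by choosing $\varepsilon\sim|\log t|^{-\sigma}$, balances the two terms and yields the claimed modulus $\omega(t)\le C|\log t|^{-\sigma}$.

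The main obstacle is the quantitative unique continuation estimate underlying the quantitative Runge approximation: converting the qualitative UCP into an explicit bound on the approximation cost requires a Carleman estimate for the Caffarelli–Silvestre extension that remains uniform up to the weighted, degenerate boundary, together with a quantitative Hahn–Banach duality translating propagation of smallness into the existence of cheap exterior data. A secondary difficulty is the low regularity $q\in H^{\delta,n/2s}$ rather than $L^\infty$: one must check that well-posedness, the mapping properties of $\Lambda_q$, and the interpolation step recovering the $L^{n/2s}$ norm all survive at this regularity, which is exactly where the hypotheses on $\delta$ and $M$ are used.
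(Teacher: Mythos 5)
This statement is not proved in the paper at all: it is imported verbatim from Rüland--Salo \cite{RS-fractional-calderon-low-regularity-stability}, so the citation is the paper's entire ``proof,'' and the right comparison target is the argument in that reference. Your outline --- the Alessandrini identity $\langle (\Lambda_{q_1}-\Lambda_{q_2})f_1,f_2\rangle=\int_\Omega (q_1-q_2)u_1u_2\,dx$, quantitative Runge approximation with exponential cost $\|f\|_{H^s}\le Ce^{C\varepsilon^{-\mu}}$ obtained from propagation-of-smallness/Carleman estimates for the Caffarelli--Silvestre extension, a duality-plus-interpolation step using the a priori $H^{\delta,\frac{n}{2s}}$ bound to recover the $L^{\frac{n}{2s}}(\Omega)$ norm, and finally the choice $\varepsilon\sim|\log t|^{-\sigma}$ balancing $\varepsilon+e^{C\varepsilon^{-\mu}}t$ --- is a faithful reconstruction of exactly that strategy, correct in both structure and in where the hypotheses on $\delta$ and $M$ enter, though as written it is a roadmap rather than a self-contained proof, since the Carleman estimate and the quantitative Hahn--Banach duality behind the Runge approximation are invoked rather than established.
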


 The following lemma is a direct consequence of   \cite[Lemma 4.2]{doi:10.1137/22M1533542}. Notice that the two multiplication maps $\mathcal M^+, \mathcal M^-$ given by $\mathcal M^\pm :w \mapsto  \gamma^{\pm 1/2} w$ are inverses of each other. 
\begin{lemma}\label{lemmaeq10991}
  Let $0<s<\min(1,n/2)$, and let $U\subset\mathbb R^n$ be an open set. Assume that there exist two constants $0<\gamma_0<1$ and $\varepsilon>0$ such that $ \gamma \in L^\infty( \R^n) $ satisfies 
   \begin{align} 
&\gamma_0\leq \gamma (x) \leq \gamma_0^{-1}, \\
&m:= \gamma^{1/2}-1 \in H^{2s+\varepsilon,\frac{n}{s}}(\R^n) .
\end{align}
Then the maps $\mathcal{M}^\pm : w \mapsto \gamma^{\pm 1/2} w$  are linear homeomorphisms from $H^s(U) $ to itself. The bounds $\| \mathcal{M}^\pm \|_{H^s(U) \to H^s(U)}$ depend on $\gamma_0, C_1$. 
\end{lemma}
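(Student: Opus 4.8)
The plan is to reduce everything to the boundedness of a single multiplication operator on the full space $H^s(\R^n)$ and then exploit the algebraic fact, noted just before the statement, that $\mathcal{M}^+$ and $\mathcal{M}^-$ are mutual inverses. Write $\gamma^{1/2} = 1 + m$ and $\gamma^{-1/2} = 1 + \tilde m$, where $m = \gamma^{1/2} - 1$ is the given background deviation and $\tilde m := \gamma^{-1/2} - 1$. Since multiplication by the constant $1$ is the identity, proving that $\mathcal{M}^\pm$ are bounded on $H^s(\R^n)$ amounts to proving that the multiplication operators $w \mapsto m w$ and $w \mapsto \tilde m w$ are bounded on $H^s(\R^n)$.

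First I would record the pointwise bounds. Because $\gamma_0 \le \gamma \le \gamma_0^{-1}$, one has $\gamma^{\pm 1/2} \in [\gamma_0^{1/2}, \gamma_0^{-1/2}]$, so both $m$ and $\tilde m$ lie in $L^\infty(\R^n)$ with norms controlled by $\gamma_0$, while $1 + m = \gamma^{1/2}$ stays bounded away from $0$. Next I would invoke the product estimate underlying \cite[Lemma 4.2]{doi:10.1137/22M1533542}: for $0 < s < n/2$ a fractional Leibniz rule gives
\[
\| g w \|_{H^s(\R^n)} \lesssim \| g \|_{L^\infty} \| w \|_{H^s(\R^n)} + \| g \|_{H^{s, n/s}(\R^n)} \| w \|_{L^{2n/(n-2s)}(\R^n)},
\]
and the Sobolev embedding $H^s(\R^n) \hookrightarrow L^{2n/(n-2s)}(\R^n)$ lets me absorb the last factor into $\| w \|_{H^s(\R^n)}$. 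Applying this with $g = m$ and noting $H^{2s+\varepsilon, n/s}(\R^n) \hookrightarrow H^{s, n/s}(\R^n)$ shows that $w \mapsto m w$ is bounded on $H^s(\R^n)$, with operator norm controlled by $\| m \|_{L^\infty}$ and $\| m \|_{H^{2s+\varepsilon, n/s}} \le C_1$; hence $\mathcal{M}^+$ is bounded.

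For $\mathcal{M}^-$ the same estimate applies once I know that $\tilde m$ enjoys the same regularity, and this is the step I expect to be the main obstacle. I would write $\tilde m = F(m)$ with $F(t) = (1+t)^{-1} - 1$, which satisfies $F(0) = 0$ and is smooth (indeed analytic) on a neighbourhood of the range of $m$, since $1 + m$ is valued in the compact interval $[\gamma_0^{1/2}, \gamma_0^{-1/2}] \subset (0,\infty)$. A Nemytskii/composition estimate for Bessel potential spaces (see e.g. Runst--Sickel), valid because $m \in H^{2s+\varepsilon, n/s}(\R^n) \cap L^\infty(\R^n)$ and $F$ is smooth with $F(0)=0$, then yields $\tilde m \in H^{2s+\varepsilon, n/s}(\R^n)$ with $\| \tilde m \|_{H^{2s+\varepsilon, n/s}}$ bounded in terms of $\gamma_0$ (which controls the range of $m$, and hence the derivatives of $F$ used in the estimate) and $C_1$. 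Feeding this into the product estimate above shows $\mathcal{M}^-$ is bounded on $H^s(\R^n)$ with the claimed dependence of constants.

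Finally I would transfer from $\R^n$ to $U$ and conclude. Boundedness on $H^s(U)$ follows from the quotient-norm definition: given $u \in H^s(U)$, pick an extension $v \in H^s(\R^n)$ with $\| v \|_{H^s(\R^n)}$ comparable to $\| u \|_{H^s(U)}$; then $\gamma^{\pm 1/2} v \in H^s(\R^n)$ is an extension of $\mathcal{M}^\pm u$, so $\| \mathcal{M}^\pm u \|_{H^s(U)} \le \| \gamma^{\pm 1/2} v \|_{H^s(\R^n)} \lesssim \| u \|_{H^s(U)}$; the map is well defined on $H^s(U)$ because $(\gamma^{\pm 1/2} v)|_U$ depends only on $v|_U = u$. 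Since $\gamma^{1/2} \gamma^{-1/2} = 1$ pointwise, for every $u \in H^s(U)$ one has $\mathcal{M}^+ \mathcal{M}^- u = u = \mathcal{M}^- \mathcal{M}^+ u$, so the two bounded operators are mutual inverses; each is therefore a linear homeomorphism of $H^s(U)$ onto itself, with norms depending only on $\gamma_0$ and $C_1$, as claimed.
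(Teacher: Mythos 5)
Your proof is correct, but it is worth noting that the paper does not actually prove Lemma \ref{lemmaeq10991} at all: it defers the statement entirely to \cite[Lemma 4.2]{doi:10.1137/22M1533542}, adding only the one-line observation that $\mathcal{M}^+$ and $\mathcal{M}^-$ are mutual inverses. What you have written is in effect a self-contained reconstruction of that cited lemma, and your ingredients match the ones used in that line of work: the Kato--Ponce estimate $\|gw\|_{H^s(\R^n)} \lesssim \|g\|_{L^\infty}\|w\|_{H^s(\R^n)} + \|g\|_{H^{s,n/s}(\R^n)}\|w\|_{L^{2n/(n-2s)}(\R^n)}$ combined with the Sobolev embedding $H^s(\R^n)\hookrightarrow L^{2n/(n-2s)}(\R^n)$ (valid precisely because $0<s<n/2$) is the standard multiplication estimate here, and the obstacle you correctly isolate --- transferring the regularity of $m$ to $\tilde m=\gamma^{-1/2}-1$ --- is exactly the content of \cite[Corollary 3.6]{RZ2022LowReg}, which the present paper invokes later (proof of Proposition \ref{step2}) in the equivalent form $\frac{m}{m+1}=-\tilde m\in H^{s,n/s}(\R^n)$. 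The one genuine methodological difference is how that step is handled: you use a general Nemytskii/composition theorem for a smooth $F$ with $F(0)=0$, after modifying $F(t)=(1+t)^{-1}-1$ away from the compact range of $m$ (legitimate, since $1+m=\gamma^{1/2}\geq\gamma_0^{1/2}$ keeps the range uniformly away from the singularity at $t=-1$), whereas the cited literature derives the needed regularity by hand from Gagliardo--Nirenberg and sharper multiplication estimates --- a route the paper itself mentions in the remark following Lemma \ref{lemmaeq10993}. Your composition route is less elementary, leaning on \cite{RunstSickel+1996}-type theorems, but it is shorter and yields the stated dependence of the operator norms on $\gamma_0$ and $C_1$ directly; note also that for the $H^s$ product estimate you only need $\tilde m\in H^{s,n/s}(\R^n)$, so your conclusion $\tilde m\in H^{2s+\varepsilon,n/s}(\R^n)$ is stronger than necessary for this lemma. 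Your remaining steps --- well-definedness and boundedness on $H^s(U)$ via the quotient norm and the locality of multiplication, and the inverse-pair argument upgrading boundedness to a homeomorphism --- are correct and coincide with the paper's remark.
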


We wish to extend the scope of Lemma \ref{lemmaeq10991} to the following new result:

 \begin{lemma}\label{lemmaeq10993}
   If the conditions of Lemma \ref{lemmaeq10991} hold, then the multiplication maps $\mathcal{M}^\pm :  w \mapsto \gamma^{\pm 1/2} w$  are linear homeomorphisms from $\widetilde{H}^s(U) $ to itself, and the bounds $\| \mathcal{M}^\pm \|_{\widetilde{H}^s(U) \to \widetilde{H}^s(U)}$ depend on $\gamma_0$ and $C_1$.  
\end{lemma}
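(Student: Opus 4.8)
The plan is to reduce everything to Lemma \ref{lemmaeq10991} together with the fact that $\widetilde{H}^s(U)$ is, by definition, the $H^s(\R^n)$-closure of $C_c^\infty(U)$. First I would record global boundedness: applying Lemma \ref{lemmaeq10991} with $U=\R^n$ (where the restriction/quotient norm reduces to the ordinary norm of $H^s(\R^n)$), the maps $\mathcal{M}^\pm$ are mutually inverse linear homeomorphisms of $H^s(\R^n)$, with operator norms controlled by $\gamma_0$ and $C_1$. This continuity on $H^s(\R^n)$ is the main analytic input about $\gamma$; in particular the regularity of $\gamma^{-1/2}-1$ is already absorbed into it.

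Next I would prove the inclusion $\mathcal{M}^\pm(C_c^\infty(U))\subset\widetilde{H}^s(U)$. Fix $\phi\in C_c^\infty(U)$ and set $f:=\gamma^{\pm1/2}\phi$. By the previous step $f\in H^s(\R^n)$, while $\supp f\subset\supp\phi=:K$, a compact subset of the open set $U$. The crucial observation is that any $f\in H^s(\R^n)$ whose support is compactly contained in $U$ lies in $\widetilde{H}^s(U)$: the mollifications $f_\delta:=f*\rho_\delta$ are smooth with $\supp f_\delta\subset K+\overline{B_\delta}\subset U$ for $\delta<\dist(K,\partial U)$, hence $f_\delta\in C_c^\infty(U)$, and $f_\delta\to f$ in $H^s(\R^n)$ because on the Fourier side mollification amounts to multiplication by $\widehat{\rho}(\delta\,\cdot\,)\to 1$ pointwise and uniformly bounded, so dominated convergence applies. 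Therefore $f\in\widetilde{H}^s(U)$.

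I would then combine the two steps. Since $\mathcal{M}^\pm$ are continuous on $H^s(\R^n)$ and carry the dense subset $C_c^\infty(U)$ of $\widetilde{H}^s(U)$ into $\widetilde{H}^s(U)$, they map all of $\widetilde{H}^s(U)$ into itself, with operator norm bounded by their $H^s(\R^n)$-norm and hence by a constant depending only on $\gamma_0$ and $C_1$. The pointwise identity $\gamma^{1/2}\gamma^{-1/2}=1$ gives $\mathcal{M}^+\mathcal{M}^-=\mathcal{M}^-\mathcal{M}^+=\id$ on $H^s(\R^n)$, and these restrict to $\widetilde{H}^s(U)$; thus on $\widetilde{H}^s(U)$ the operators $\mathcal{M}^+$ and $\mathcal{M}^-$ are mutually inverse bounded maps, so each is bijective with bounded inverse, i.e.\ a linear homeomorphism of $\widetilde{H}^s(U)$ onto itself.

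I expect the support-localization step to be the main obstacle: promoting ``$f\in H^s(\R^n)$ with compact support in $U$'' to genuine membership in the closure $\widetilde{H}^s(U)$ is not automatic, since $H^s$-functions are nonlocal and a priori only known to vanish off $K$ in the distributional sense. The mollification argument resolves it precisely because $K=\supp\phi$ is \emph{compactly} contained in the open set $U$, which leaves room to fit the slightly enlarged supports $K+\overline{B_\delta}$ inside $U$; note that no regularity of $\partial U$ is required, so the statement holds for arbitrary open $U$ as claimed.
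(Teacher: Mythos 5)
Your proposal is correct, and its outer skeleton coincides with the paper's: both arguments first establish the key inclusion $\mathcal{M}^\pm(C_c^\infty(U))\subset\widetilde{H}^s(U)$, and then conclude exactly as you do, by taking $\phi_l\in C_c^\infty(U)$ with $\phi_l\to w$ in $H^s(\R^n)$, invoking the $H^s(\R^n)$-continuity of $\mathcal{M}^\pm$ from Lemma \ref{lemmaeq10991} and the closedness of $\widetilde{H}^s(U)$, with invertibility coming from $\mathcal{M}^+\mathcal{M}^-=\mathcal{M}^-\mathcal{M}^+=\id$. Where you genuinely diverge is in the proof of the auxiliary inclusion (the paper's Lemma \ref{lemmaeq10992}): the paper fixes a cutoff $\chi\in C_c^\infty(U)$ equal to $1$ on $\supp\psi$, approximates the \emph{factor} $\gamma^{\pm1/2}\chi$ in $H^s(\R^n)$ by smooth functions via the Meyers--Serrin theorem, and multiplies the approximants by $\psi$, which requires an estimate of the form $\|g\psi\|_{H^s(\R^n)}\lesssim_\psi\|g\|_{H^s(\R^n)}$ (the constant $\|\psi\|_{H^s(\R^n)}$ displayed in the paper is actually too optimistic for $s<n/2$, though the step is easily repaired with a constant depending on, say, $\|\psi\|_{W^{1,\infty}}$ by interpolation or Kato--Ponce). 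You instead approximate the \emph{product} $f=\gamma^{\pm1/2}\phi$ directly by mollification, exploiting that $\supp f\subset\supp\phi\Subset U$ leaves room for the enlarged supports $\supp(f\ast\rho_\delta)\subset\supp\phi+\overline{B_\delta}\subset U$, with $H^s$-convergence by dominated convergence on the Fourier side. Your route is more elementary and self-contained: it needs no smooth approximation of the factor and no product estimate, and it yields as a byproduct the generally useful fact that any $H^s(\R^n)$ function with support compactly contained in an open set $U$ belongs to $\widetilde{H}^s(U)$ (for arbitrary open $U$, matching the paper's generality). The paper's route, in exchange, stays entirely within the multiplication machinery of Lemma \ref{lemmaeq10991} already set up for the rest of the article.
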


In order to prove Lemma \ref{lemmaeq10993}, we first show the following auxiliary Lemma:
   
   \begin{lemma}\label{lemmaeq10992}
  If the conditions of Lemma \ref{lemmaeq10991} hold, then $\gamma^{\pm 1/2} \psi \in \widetilde{H}^s(U)$ for every $\psi \in C^\infty_c(U)$. 
\end{lemma}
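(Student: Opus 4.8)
The plan is to reduce the statement to two facts: that $\gamma^{\pm 1/2}\psi$ belongs to $H^s(\R^n)$ as a function on the whole space, and that it can be approximated in the $H^s(\R^n)$-norm by test functions supported in $U$. Fix $\psi\in C_c^\infty(U)$ and write $g^\pm:=\gamma^{\pm 1/2}\psi=\mathcal M^\pm\psi$, interpreted as the pointwise product. Since $C_c^\infty(U)\subset H^s(\R^n)$, and since Lemma \ref{lemmaeq10991} is stated for an \emph{arbitrary} open set, I would first apply it with the choice $U=\R^n$: this shows that $\mathcal M^\pm$ is a bounded operator on $H^s(\R^n)$, whence $g^\pm\in H^s(\R^n)$. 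This is the whole-space regularity we need, and it is the only point at which the hypothesis $m\in H^{2s+\varepsilon,n/s}(\R^n)$ enters, namely through Lemma \ref{lemmaeq10991}.

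Next I would record the support property. Because $\gamma^{\pm 1/2}\in L^\infty(\R^n)$ acts merely as a multiplier, $g^\pm$ vanishes wherever $\psi$ does, so $\supp(g^\pm)\subseteq\supp(\psi)=:K$, a compact subset of the open set $U$. In particular $d_0:=\dist(K,\R^n\setminus U)>0$ (with the convention $d_0=+\infty$ if $U=\R^n$). I would then approximate $g^\pm$ by test functions supported in $U$: let $(\eta_\delta)_{\delta>0}$ be a standard family of mollifiers with $\supp\eta_\delta\subset B_\delta(0)$, and set $g^\pm_\delta:=\eta_\delta\ast g^\pm$. For $\delta<d_0$ one has $\supp(g^\pm_\delta)\subseteq K+\overline{B_\delta(0)}\subset U$, so $g^\pm_\delta\in C_c^\infty(U)$. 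Moreover $g^\pm_\delta\to g^\pm$ in $H^s(\R^n)$ as $\delta\to 0$: this is the standard fact that convolution with an approximate identity converges in $H^{s,2}(\R^n)$, which follows because the Bessel potential operator $\mathcal F^{-1}((1+|\xi|^2)^{s/2}\mathcal F\,\cdot\,)$ commutes with convolution and mollifications converge in $L^2$. Since $\widetilde H^s(U)$ is by definition the $H^s(\R^n)$-closure of $C_c^\infty(U)$, I conclude $g^\pm=\gamma^{\pm 1/2}\psi\in\widetilde H^s(U)$. The same argument applies verbatim to both signs.

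I expect no substantial obstacle here, provided one notices that Lemma \ref{lemmaeq10991} may be invoked with $U=\R^n$ to obtain membership in $H^s(\R^n)$ rather than only in the local space $H^s(U)$; the latter alone would not suffice, since $\widetilde H^s(U)$ concerns the zero-extension and its approximability by functions supported in $U$. If one preferred to avoid this device, the alternative and more computational route would be to prove $\gamma^{\pm 1/2}\psi\in H^s(\R^n)$ directly from the Gagliardo seminorm via the splitting $\gamma^{\pm 1/2}\psi(x)-\gamma^{\pm 1/2}\psi(y)=\psi(x)\bigl(\gamma^{\pm 1/2}(x)-\gamma^{\pm 1/2}(y)\bigr)+\gamma^{\pm 1/2}(y)\bigl(\psi(x)-\psi(y)\bigr)$, using that the Morrey embedding $H^{2s+\varepsilon,n/s}(\R^n)\hookrightarrow C^{0,s+\varepsilon}(\R^n)$ (valid since $(2s+\varepsilon)\tfrac{n}{s}>n$) endows $\gamma^{\pm 1/2}$ with a Hölder exponent strictly larger than $s$, together with the compact support of $\psi$ and $\gamma^{\pm1/2}-1\in L^{n/s}(\R^n)$ to control the long-range part; the first route, however, is cleaner and I would adopt it.
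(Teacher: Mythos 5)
Your proof is correct, and it takes a genuinely different route from the paper's at the approximation step, while sharing the same logical skeleton (whole-space $H^s$ membership via Lemma \ref{lemmaeq10991}, then approximation by elements of $C_c^\infty(U)$). The paper also starts by invoking Lemma \ref{lemmaeq10991} on all of $\R^n$, but applies it to a cutoff $\chi\in C_c^\infty(U)$ equal to $1$ on $\supp\psi$, obtaining $\gamma^{\pm 1/2}\chi\in H^s(\R^n)$; it then uses the Meyers--Serrin theorem to pick smooth $\varphi_k^\pm\in C^\infty(\R^n)\cap H^s(\R^n)$ with $\varphi_k^\pm\to\gamma^{\pm 1/2}\chi$ in $H^s(\R^n)$, and takes $\varphi_k^\pm\psi\in C_c^\infty(U)$ as the approximating sequence, which requires boundedness on $H^s(\R^n)$ of multiplication by the fixed test function $\psi$. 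You instead mollify the compactly supported function $\gamma^{\pm 1/2}\psi$ itself, using $\dist(\supp\psi,\,\R^n\setminus U)>0$ to keep supports inside $U$ and the standard fact that mollification converges in $H^s(\R^n)$ because convolution commutes with the Bessel multiplier. Your route buys two things: it dispenses with both the auxiliary cutoff $\chi$ and any product estimate, and it quietly sidesteps a loose point in the paper, whose displayed bound $\|\varphi_k^\pm\psi-\gamma^{\pm 1/2}\chi\psi\|_{H^s(\R^n)}\leq\|\varphi_k^\pm-\gamma^{\pm 1/2}\chi\|_{H^s(\R^n)}\|\psi\|_{H^s(\R^n)}$ is not literally valid ($H^s$ is not a normed algebra for $s<n/2$); the correct constant there is of the type $\|\langle\xi\rangle^s\widehat{\psi}\|_{L^1(\R^n)}$, finite by the Paley--Wiener decay the paper itself invokes, so the paper's argument is repairable but yours needs no repair. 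What the paper's approach buys in exchange is independence from translation/mollification structure: the Meyers--Serrin-plus-cutoff device also works in settings where one approximates a function that is not compactly supported, whereas your argument leans essentially on $\supp(\gamma^{\pm 1/2}\psi)\subseteq\supp\psi\Subset U$, which here is exactly what the statement provides. Your emphasis that Lemma \ref{lemmaeq10991} must be applied with $U=\R^n$ (local membership in $H^s(U)$ would not suffice for the $\widetilde H^s(U)$ conclusion) is precisely the point the paper uses implicitly.
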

   
   \begin{proof}

   Let $\chi \in C_c^\infty(U)$ be such that it equals 1 on the support of $\psi$. By Lemma \ref{lemmaeq10991}, $ \gamma^{\pm 1/2} \chi \in H^s(\R^n)$. 
   By the Meyers–Serrin theorem, we can find two sequences $\{\varphi^\pm_k\}_{k\in\mathbb N} \subset C^\infty (\R^n) \cap H^s(\R^n)$ such that  $\varphi^\pm_k \rightarrow \gamma^{\pm 1/2} \chi$ in $H^s(\R^n)$ as $k \to \infty$. 
   Then $\varphi^\pm_k \psi  \in C^\infty_c( U)$ and 
   \begin{align}\label{237777}
   \| \varphi^\pm_k \psi -  \gamma^{\pm 1/2} \psi \|_{H^s(\R^n)} &=    \| \varphi^\pm_k \psi -  \gamma^{\pm 1/2} \chi \psi \|_{H^s(\R^n)}\\
   &\leq    \| \varphi^\pm_k  -  \gamma^{\pm 1/2} \chi  \|_{H^s(\R^n)} \| \psi \|_{H^s(\R^n)} .
   \end{align}
   As $ \psi \in C^\infty_c (U)$, the Fourier transform $\widehat\psi$ is rapidly decaying by Paley-Wiener theorem. Hence $ \| \psi \|_{H^s(\R^n)} = \| \langle \xi \rangle^s   \hat\psi  \|_{L^2(\R^n)} < \infty$. 
   This, together with the convergence $\varphi^\pm_k \to \gamma^{\pm 1/2}  \chi $, implies that the right-hand side in \eqref{237777} vanishes as $k \to \infty$. 
   Therefore, the sequence $\{\varphi^\pm_k \psi\}_{k\in\mathbb N}  \subset C^\infty_c(U)$ converges to $  \gamma^{\pm 1/2} \psi$ in $H^s(\R^n)$. Hence, $\gamma^{\pm 1/2} \psi \in  \widetilde{H}^s(U)$.  
   \end{proof}

   Using the above Lemma \ref{lemmaeq10992}, we now prove Lemma \ref{lemmaeq10993}.
   
\begin{proof}[Proof of Lemma \ref{lemmaeq10993}]
   
   It suffices to prove that for each $u \in  \widetilde{H}^s(U)$ the image $\gamma^{\pm 1/2} u$ lies in  $ \widetilde{H}^s(U)$. 
	The continuity then follows by Lemma \ref{lemmaeq10991}. 
   Fix arbitrary $w \in  \widetilde{H}^s(U)$ and test functions $\phi_l \in C_c^\infty (U)$, $l=1,2,3,\dots$ such that   $\phi_l \to w$ in $H^s(\R^n)$. 
   We have $\gamma^{\pm 1/2 } \phi_l \in \widetilde{H}^s(U)$ for every $l=1,2,3,\dots,$ by Lemma \ref{lemmaeq10992}. 
   Moreover, by Lemma \ref{lemmaeq10991} we have $\gamma^{\pm 1/2 } \phi_l \to \gamma^{\pm 1/2 } w$ in $H^s(\R^n)$. 
   Hence, $\gamma^{\pm 1/2 } w $ lies in the closure of $ \widetilde{H}^s(U)$ in $H^s(\R^n)$. 
 Because $ \widetilde{H}^s(U)$ is closed in $H^s(\R^n)$ by definition, we conclude  $\gamma^{\pm 1/2 } w  \in \widetilde{H}^s(U)$. 
\end{proof}

\begin{remark} Lemma \ref{lemmaeq10993} could also be proved using the Gagliardo--Nirenberg inequality and sharper multiplication estimates following the work \cite[the first item on p. 4, Lemma 3.4 (7), and Corollary 3.6]{RZ2022LowReg}. We have given this direct proof here to keep our current work self-contained.
\end{remark}

The next Lemma relates the DN map of the original problem for the fractional conductivity equation to the DN map of the Schr\"odinger problem obtained after the fractional Liouville reduction. 

\begin{lemma}\label{step1} 
Assume that the conditions of Theorem \ref{thm: stability estimate partial I} hold. Then, 
\[
    \norm{\Lambda_{q_1}-\Lambda_{q_2}}_{\widetilde{H}^s (W_1) \to (\widetilde{H}^s (W_2))^*} \leq c\norm{\Lambda_{\gamma_1}-\Lambda_{\gamma_2}}_{\widetilde{H}^s (W_1) \to (\widetilde{H}^s (W_2))^*}. 
\]
where the constant $c$ depends on $\gamma_0$, $\Omega$, $W_1$, $W_2$, $n$, $s$ and $C_1$.
\end{lemma}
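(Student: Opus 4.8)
The plan is to push the problem over to the Schr\"odinger side via the Liouville reduction of Lemma \ref{lemma: Liouville reduction} and then to exploit the hypothesis $\gamma_1=\gamma_2$ in $\Omega_e$ to match the test functions associated with the two potentials. The starting point is the identity furnished by the reduction: under the assumptions of Theorem \ref{thm: stability estimate partial I} (which in particular give $m_\gamma\in H^{s,n/s}(\R^n)$, so Lemma \ref{lemma: Liouville reduction} applies), one has $B_\gamma(u,\phi)=B_{q_\gamma}(\gamma^{1/2}u,\gamma^{1/2}\phi)$ for all $u,\phi\in H^s(\R^n)$, where $B_{q}$ is the Schr\"odinger form. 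First I would use this to show that if $u_1$ solves \eqref{cc11} with exterior value $f_1$, then $v_1:=\gamma^{1/2}u_1$ solves \eqref{oo11} with potential $q_\gamma$ and exterior value $\gamma^{1/2}f_1$: the exterior value is correct because $v_1-\gamma^{1/2}f_1=\gamma^{1/2}(u_1-f_1)\in\widetilde H^s(\Omega)$ by Lemma \ref{lemmaeq10993} with $U=\Omega$, and $v_1$ is a weak solution because $B_{q_\gamma}(v_1,\gamma^{1/2}\phi)=B_\gamma(u_1,\phi)=0$ for all $\phi\in\widetilde H^s(\Omega)$, while $\phi\mapsto\gamma^{1/2}\phi$ maps $\widetilde H^s(\Omega)$ onto itself (again Lemma \ref{lemmaeq10993}). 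Inserting this into the definitions of the two DN maps yields the key identity
\[
\langle \Lambda_{\gamma_j}f_1,f_2\rangle = \langle \Lambda_{q_j}(\gamma_j^{1/2}f_1),\gamma_j^{1/2}f_2\rangle,\qquad j=1,2.
\]

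Next I would bring in the partial-data hypothesis. Since $W_1,W_2\Subset\Omega_e$ and $\gamma_1=\gamma_2$ on $\Omega_e$, for any $g_1\in C_c^\infty(W_1)$ and $g_2\in C_c^\infty(W_2)$ the functions $f_j:=\gamma_1^{-1/2}g_j=\gamma_2^{-1/2}g_j$ are unambiguously defined, lie in $\widetilde H^s(W_j)$ by Lemma \ref{lemmaeq10993} with $U=W_j$, and satisfy $\gamma_1^{1/2}f_j=\gamma_2^{1/2}f_j=g_j$. Writing the identity above for $j=1$ and $j=2$ with this choice of $f_1,f_2$ and subtracting, the mismatched multipliers $\gamma_1^{1/2}$ and $\gamma_2^{1/2}$ collapse onto the common datum $g_j$, giving
\[
\langle (\Lambda_{q_1}-\Lambda_{q_2})g_1,g_2\rangle = \langle (\Lambda_{\gamma_1}-\Lambda_{\gamma_2})f_1,f_2\rangle.
\]
This exterior agreement is the crucial point: it is exactly what lets the two distinct Liouville substitutions be replaced by one and the same exterior value, so that the difference of Schr\"odinger DN maps is tested against a single pair $(g_1,g_2)$ and the difference of conductivity DN maps against the single pair $(f_1,f_2)$.

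Finally I would estimate. Taking absolute values and using the operator-norm bound on $\widetilde H^s(W_1)\to(\widetilde H^s(W_2))^*$ (which, by density of $C_c^\infty(W_j)$ in $\widetilde H^s(W_j)$, holds for all $\widetilde H^s$ arguments) gives
\[
|\langle (\Lambda_{q_1}-\Lambda_{q_2})g_1,g_2\rangle| \le \norm{\Lambda_{\gamma_1}-\Lambda_{\gamma_2}}_{W_1,W_2}\,\norm{f_1}_{H^s(\R^n)}\norm{f_2}_{H^s(\R^n)},
\]
and then Lemma \ref{lemmaeq10993} controls $\norm{f_j}_{H^s(\R^n)}=\norm{\gamma^{-1/2}g_j}_{H^s(\R^n)}\le \norm{\mathcal M^-}_{\widetilde H^s(W_j)\to\widetilde H^s(W_j)}\,\norm{g_j}_{H^s(\R^n)}$ with a bound depending only on $\gamma_0$ and $C_1$. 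Taking the supremum over $g_1,g_2$ of unit $H^s(\R^n)$-norm produces the claimed inequality, with $c$ the product of the two multiplier norms, hence depending only on $\gamma_0,C_1$ and on $W_1,W_2,n,s$ (and $\Omega$) through the spaces involved. The main obstacle to watch is not the final estimate but the bookkeeping of the reduction step: one must verify that $v_1=\gamma^{1/2}u_1$ genuinely solves \eqref{oo11} with the correct exterior value, that $\Lambda_{q_j}$ is well defined (i.e. that $0$ is not a Dirichlet eigenvalue for $q_j$, which transfers from the well-posedness of the conductivity problem through the same reduction), and that every multiplication keeps the functions inside the correct $\widetilde H^s$ spaces --- precisely what Lemmas \ref{lemmaeq10992} and \ref{lemmaeq10993} are designed to guarantee.
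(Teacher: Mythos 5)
Your proof is correct and takes essentially the same approach as the paper's: the Liouville reduction identity $\langle\Lambda_{q_j}f,g\rangle=\langle\Lambda_{\gamma_j}(\gamma_j^{-1/2}f),\gamma_j^{-1/2}g\rangle$, the hypothesis $\gamma_1=\gamma_2$ in $\Omega_e$ to reconcile the two multipliers on exterior-supported data, and Lemma \ref{lemmaeq10993} for the mapping and norm bounds. The only difference is cosmetic: the paper telescopes the difference of DN maps into three terms and uses the exterior agreement to annihilate two of them, while you invoke that agreement up front to choose a single common datum $f_j=\gamma_1^{-1/2}g_j=\gamma_2^{-1/2}g_j$, so the subtraction collapses directly to the same key identity \eqref{eq: identity}.
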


\begin{proof}
    Denote by $v^{i}_f\in H^s(\R^n)$, $i=1,2$ the unique solution to the fractional Schr\"odinger equation \eqref{oo11} with exterior value $f \in H^s(W_1)$ and potential $ q_i := -\frac{(-\Delta)^sm_i}{\gamma_i^{1/2}}$.
    Using Lemma~\ref{lemma: Liouville reduction},  we deduce for $i=1,2$ that the following identity holds for all $g\in H^s(W_2)$:
    \[
        \langle \Lambda_{q_i} f,g\rangle= B_{q_i}(v_f^{i},g)=B_{\gamma_i}(\gamma_i^{-1/2}v_f^{i}, \gamma_{i}^{-1/2} g)=\langle \Lambda_{\gamma_i} (\gamma_i^{-1/2}f), \gamma_i^{-1/2}g \rangle.
    \]
   Hence, 
    \[
    \begin{split}
        &\langle (\Lambda_{q_1}-\Lambda_{q_2}) f, g\rangle \\ &= \langle \Lambda_{\gamma_1} (\gamma_1^{-1/2}f), \gamma_1^{-1/2}g \rangle -\langle \Lambda_{\gamma_2}(\gamma_2^{-1/2}f),\gamma_2^{-1/2}g \rangle \\
        &=\,\langle \Lambda_{\gamma_1} (\gamma_1^{-1/2}f),(\gamma_1^{-1/2}-\gamma_2^{-1/2})g \rangle  +\langle \Lambda_{\gamma_2} (\gamma_1^{-1/2}-\gamma_2^{-1/2})f, \gamma_2^{-1/2}g \rangle\\
        &\quad \,\, +\langle (\Lambda_{\gamma_1}-\Lambda_{\gamma_2}) (\gamma_1^{-1/2}f), \gamma_2^{-1/2}g\rangle.\\
    \end{split}
    \]
   As $\gamma_1 = \gamma_2 $ in $\Omega_e$,   we have that both $   (\gamma_1^{-1/2}-\gamma_2^{-1/2})g$ and $(\gamma_1^{-1/2}-\gamma_2^{-1/2})f $ are zero in $\Omega_e$. 
    Thus,  the terms $    \langle \Lambda_{\gamma_1} (\gamma_1^{-1/2}f) , (\gamma_1^{-1/2}-\gamma_2^{-1/2})g \rangle   $ and $\langle \Lambda_{\gamma_2} (\gamma_1^{-1/2}-\gamma_2^{-1/2})f , \gamma_2^{-1/2}g  \rangle$ above vanish and 
    we obtain 
    \begin{equation}\label{eq: identity}
             \langle (\Lambda_{q_1}-\Lambda_{q_2}) f,g\rangle = \langle (\Lambda_{\gamma_1}-\Lambda_{\gamma_2}) (\gamma_1^{-1/2}f),\gamma_2^{-1/2}g\rangle.
    \end{equation}
    Using Lemma \ref{lemmaeq10993} and \eqref{eq: identity}, we obtain the estimate 
    \begin{equation}\label{eq:Schrodinger-to-conductivity-DN-maps-estimate}
    \norm{\Lambda_{q_1}-\Lambda_{q_2}}_{\widetilde{H}^s (W_1) \to (\widetilde{H}^s (W_2))^*} \leq c\norm{\Lambda_{\gamma_1}-\Lambda_{\gamma_2}}_{\widetilde{H}^s (W_1) \to (\widetilde{H}^s (W_2))^*},
    \end{equation}
	where the constant $c>0$  depends on $\gamma_0$, $\Omega$, $W_1$, $W_2$, $n$, $s$ and $C_1$.
	\end{proof}

In the next proposition we relate the DN map for the original fractional conductivity problem directly to the transformed Schr\"odinger potentials: 

\begin{proposition}\label{step2}
Assume that the conditions of Theorem \ref{thm: stability estimate partial I} hold. Then, 
\[
     \| q_1 - q_2 \|_{L^{\frac{n}{2s}}(\Omega)} \leq \omega(\| \Lambda_{\gamma_1} - \Lambda_{\gamma_2} \|_{\widetilde{H}^s (W_1) \to (\widetilde{H}^s (W_2))^*}),
\]
 where $\omega$ is a logarithmic modulus of continuity satisfying
    \[
     \omega(t) \leq C|\log t|^{-\sigma},\quad\text{for}\quad 0 < t\leq \lambda,
    \]
    for three constants $\sigma,\lambda,C>0$ depending on $s,\varepsilon,n,\Omega, W_1,W_2$ and $\gamma_0$. 
\end{proposition}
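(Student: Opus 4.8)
The plan is to derive Proposition \ref{step2} by composing the Rüland--Salo stability estimate for the fractional Schrödinger equation (Theorem \ref{thm: ruland salo proposition}) with the comparison of Dirichlet-to-Neumann maps from Lemma \ref{step1}. Recall that $q_j = -\frac{(-\Delta)^s m_j}{\gamma_j^{1/2}}$. The bulk of the work is to check that the pair $q_1,q_2$ satisfies the hypotheses of Theorem \ref{thm: ruland salo proposition}, namely a uniform bound $\|q_j\|_{H^{\delta,n/(2s)}(\Omega)}\le M$ with $M$ depending on $\gamma_0, C_1, n, s, \Omega$, together with the absence of a Dirichlet eigenvalue at zero. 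Once this is in place the estimate follows by chaining the two inequalities and absorbing the multiplicative constant from Lemma \ref{step1} into the logarithmic modulus.

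For the regularity bound I would first use that $(-\Delta)^s\colon H^{2s+\varepsilon,n/s}(\R^n)\to H^{\varepsilon,n/s}(\R^n)$ is bounded (it is a Fourier multiplier with symbol dominated by $\langle\xi\rangle^{2s}$), so $(-\Delta)^s m_j\in H^{\varepsilon,n/s}(\R^n)$ with norm $\lesssim C_1$. On the bounded domain $\Omega$ one has the Sobolev embedding $H^{\varepsilon,n/s}(\Omega)\hookrightarrow H^{\delta,n/(2s)}(\Omega)$ for any $0<\delta\le\varepsilon$, valid because $n/s\ge n/(2s)$ forces the inclusion $L^{n/s}(\Omega)\hookrightarrow L^{n/(2s)}(\Omega)$ on a bounded set. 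Writing $\gamma_j^{-1/2}=1+\tilde m_j$ and using $\gamma_j^{1/2}=1+m_j\ge\gamma_0^{1/2}$ together with the smoothness of $t\mapsto(1+t)^{-1}$, a superposition estimate gives $\tilde m_j\in H^{2s+\varepsilon,n/s}(\R^n)\cap L^\infty$ with norm controlled by $\gamma_0$ and $C_1$; as in Lemma \ref{lemmaeq10991}, this makes $\gamma_j^{-1/2}$ a bounded pointwise multiplier on $H^{\delta,n/(2s)}(\Omega)$ for $\delta$ small. Hence $q_j|_\Omega=-\gamma_j^{-1/2}(-\Delta)^s m_j\in H^{\delta,n/(2s)}(\Omega)$ with the required uniform bound.

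The Dirichlet eigenvalue condition I would deduce from the well-posedness of the conductivity problem. If $v\in H^s(\R^n)$ solves $(-\Delta)^s v+q_j v=0$ in $\Omega$ with $v|_{\Omega_e}=0$, then by the Liouville reduction (Lemma \ref{lemma: Liouville reduction}) the function $u:=\gamma_j^{-1/2}v$ is a weak solution of the conductivity equation with zero exterior data, and $u\in\widetilde H^s(\Omega)$ by Lemma \ref{lemmaeq10993}; uniqueness for the conductivity problem (Lemma \ref{lemm123}) forces $u\equiv0$, hence $v\equiv0$. With both hypotheses verified, Theorem \ref{thm: ruland salo proposition} yields $\|q_1-q_2\|_{L^{n/(2s)}(\Omega)}\le C|\log(\|\Lambda_{q_1}-\Lambda_{q_2}\|_{W_1,W_2})|^{-\sigma}$ for the measurement norm below $1$. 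Inserting Lemma \ref{step1}, which gives $\|\Lambda_{q_1}-\Lambda_{q_2}\|_{W_1,W_2}\le c\,t$ with $t:=\|\Lambda_{\gamma_1}-\Lambda_{\gamma_2}\|_{W_1,W_2}$, and using that $x\mapsto|\log x|^{-\sigma}$ is increasing on $(0,1)$, one bounds the right-hand side by $C|\log(ct)|^{-\sigma}$. Since $|\log(ct)|\ge\tfrac12|\log t|$ for $t$ below a threshold $\lambda$ depending on $c$, this is at most $C'|\log t|^{-\sigma}$, which is the claimed modulus $\omega$ after adjusting the constant and $\lambda$.

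The main obstacle is a mismatch of integrability exponents: the plain term $(-\Delta)^s m_j$ lies globally only in $H^{\varepsilon,n/s}(\R^n)$, which does \emph{not} embed into $L^{n/(2s)}(\R^n)$ on all of $\R^n$ (the Sobolev trade of smoothness for integrability only raises the exponent), so $q_j\notin H^{\delta,n/(2s)}(\R^n)$ in general and Theorem \ref{thm: ruland salo proposition} cannot be applied verbatim. I would resolve this by replacing $q_j$ with $\chi q_j$ for a cutoff $\chi\in C_c^\infty(\R^n)$ equal to $1$ on a neighborhood of $\overline\Omega\cup\overline{W_1}\cup\overline{W_2}$: the compact support upgrades $\chi q_j$ to $H^{\delta,n/(2s)}(\R^n)$, while the facts that the exterior problem sees $q_j$ only through its values on $\Omega$ (for the solution) and on $W_2$ (for the pairing against $g\in\widetilde H^s(W_2)$) guarantee $\Lambda_{\chi q_j}=\Lambda_{q_j}$ as maps $\widetilde H^s(W_1)\to(\widetilde H^s(W_2))^*$ and $\chi q_1-\chi q_2=q_1-q_2$ on $\Omega$. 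The remaining delicate points are the precise multiplier estimate in $H^{\delta,n/(2s)}(\Omega)$ and the bookkeeping of admissible exponents $\delta$ and $\varepsilon$.
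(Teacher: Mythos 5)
Your proposal is correct and follows the same backbone as the paper's proof: verify the hypotheses of Theorem \ref{thm: ruland salo proposition} for $q_j=-\gamma_j^{-1/2}(-\Delta)^s m_j$, then chain with Lemma \ref{step1} and absorb the constant into the logarithmic modulus. The deviations are worth recording. For the regularity bound, the paper avoids your multiplier/superposition step by the explicit algebraic decomposition $q_j = -(-\Delta)^s m_j + \frac{m_j}{m_j+1}(-\Delta)^s m_j$, treating the first term via the embedding $H^{\delta,\frac{n}{s}}(\Omega)\hookrightarrow H^{\delta,\frac{n}{2s}}(\Omega)$ on the bounded domain (with $\delta=\min(s,\varepsilon)$) and the second via the Kato--Ponce inequality, using only $\frac{m_j}{m_j+1}\in H^{s,\frac{n}{s}}(\R^n)$ as cited from the proof of \cite[Corollary 3.6]{RZ2022LowReg}; your stronger claim $\tilde m_j\in H^{2s+\varepsilon,\frac{n}{s}}(\R^n)$ is more than is needed and requires a smooth-composition theorem at smoothness possibly exceeding $1$ (e.g. Runst--Sickel, Ch.~5), so if you keep your route you should either cite such a result or weaken the claim to the $\delta$-regularity you actually use. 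Second, you verify the zero-Dirichlet-eigenvalue hypothesis explicitly via the Liouville reduction, Lemma \ref{lemmaeq10993}, and uniqueness for the conductivity problem; the paper leaves this condition implicit, so this is a genuine completeness gain on your side. Third, your ``main obstacle'' is real at the level of the literal statement of Theorem \ref{thm: ruland salo proposition}, which posits $q_1,q_2\in H^{\delta,\frac{n}{2s}}(\R^n)$ globally: the paper's own proof establishes only the $\Omega$-norm bound (global membership is obtained only for the product term, while $-(-\Delta)^s m_j$ is merely in $H^{\delta,\frac{n}{s}}(\R^n)$ globally, and indeed $L^{\frac{n}{s}}(\R^n)\not\hookrightarrow L^{\frac{n}{2s}}(\R^n)$), tacitly relying on the fact that the R\"uland--Salo argument sees the potential only on $\Omega$ and near the measurement sets. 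Your cutoff $\chi q_j$ with $\chi\equiv 1$ on a neighborhood of $\overline\Omega\cup\overline{W_1}\cup\overline{W_2}$, together with the compact-support embedding upgrading $\chi q_j$ to $H^{\delta,\frac{n}{2s}}(\R^n)$, the identity $\Lambda_{\chi q_j}=\Lambda_{q_j}$ as maps $\widetilde{H}^s(W_1)\to(\widetilde{H}^s(W_2))^*$, and $\chi q_1-\chi q_2=q_1-q_2$ on $\Omega$, makes this step rigorous and is a legitimate refinement of the published argument rather than a departure from it.
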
		
\begin{proof}
    It suffices to show that the conditions of Theorem \ref{thm: ruland salo proposition} are met, since the claim will then follow by Lemma \ref{step1}. 
   To this end, we will show that there exist two constants $\delta, M > 0$ such that 
    \begin{equation}\label{qequation}
\| q_j \|_{H^{\delta,\frac{n}{2s}}(\Omega)}\leq M,\quad j=1,2.
\end{equation}
We begin by writing $q_j$ in the form 
\begin{align}
q_j &=- \frac{1}{\gamma^{1/2}_j } (- \Delta)^s m_j =- \left( 1 - \frac{m_j}{m_j + 1} \right) (- \Delta)^s m_j  \\&= -  (- \Delta)^s m_j   +   \frac{m_j}{m_j + 1}  (- \Delta)^s m_j.  \label{kaksitermia}
\end{align}
Set $\delta := \min(s,\varepsilon) $. 
Notice that  $m_j  \in H^{2s+\varepsilon, n/s} (\R^n)$ by assumption. Hence, the term $ -  (- \Delta)^s m_j$ above belongs to $H^{\varepsilon, \frac{n}{s} } (\R^n) \subseteq H^{\delta, \frac{n}{s} } (\R^n)  $ which is embedded continuously into a subspace of $ H^{\delta, \frac{n}{s} } (U)$ via the quotient map. 
We recall that $H^{s, p } (\Omega) $ coincides with the Triebel-Lizorkin space $ F_{p,2}^s ( \Omega)  $ for every $p>1$ and $s >0$. By \cite[Chapter 2.4.4, Theorem 1]{RunstSickel+1996}, 
$F_{\frac{n}{s},2}^\delta ( \Omega)  \hookrightarrow F_{\frac{n}{2s},2}^\delta ( \Omega)$. Hence, $H^{\delta, \frac{n}{s} } (\Omega)  \hookrightarrow H^{\delta, \frac{n}{2s} } (\Omega) $ and we conclude  
\[
  \|   (- \Delta)^s m_j  \|_{H^{\delta, \frac{n}{2s}} (\Omega)} < \infty. 
  \]
  Let us now focus on the last term in \eqref{kaksitermia}. 
First of all,  $ \frac{m_j}{m_j + 1} \in H^{s, \frac{n}{s} } (\R^n)$,  as shown in the proof of \cite[Corollary 3.6]{RZ2022LowReg}.
In particular, we have  $\frac{m_j}{m_j + 1} \in H^{\delta, \frac{n}{s} } (\R^n)$ and  $\frac{m_j}{m_j + 1} \in L^{\frac{n}{s}} (\R^n)$. Moreover, 
 $ (- \Delta)^s m_j \in H^{\delta, \frac{n}{s} } (\R^n) \subset L^{\frac{n}{s}} (\R^n) $, as shown above. 
By the Kato-Ponce inequality (see the original work \cite{https://doi.org/10.1002/cpa.3160410704} or \cite[Proposition 2.2]{RZ2022LowReg})
\begin{align}
\left\|  \frac{m_j}{m_j + 1}  (- \Delta)^s m_j \right\|_{ H^{\delta, \frac{n}{2s}} (\R^n) }  
 \lesssim \left\|  \frac{m_j}{m_j + 1}  \right\|_{ H^{\delta , \frac{n}{s}} (\R^n) } \left\|  (- \Delta)^s m_j \right\|_{ L^{ \frac{n}{s}} (\R^n) } \\
+ \left\|  \frac{m_j}{m_j + 1}  \right\|_{ L^{\frac{n}{s}} (\R^n) } \left\|  (- \Delta)^s m_j \right\|_{ H^{\delta, \frac{n}{s}} (\R^n) }. 
\end{align}
As argued above, the right hand side is finite. 
Thus, the term $\frac{m_j}{m_j + 1}  (- \Delta)^s m_j$ lies in $H^{\delta, \frac{n}{2s}} (\R^n)$. 
In conclusion, 
\[
\| q_j \|_{ H^{\delta, \frac{n}{2s}} (\Omega) } < \infty .  
\] 
and  $\| q_j \|_{H^{\delta,\frac{n}{2s}}(\Omega)}\leq M< \infty $ holds for 
$
M :=  \max_{j=1,2} \| q_j  \|_{H^{\delta, \frac{n}{2s}} (\Omega)}.
$
    \end{proof}

    Finally, we derive an estimate connecting  $\| \gamma_1^{1/2} - \gamma_2^{1/2} \|_{H^{s}(\Omega)} $ to  $   \| q_1 - q_2 \|_{L^{\frac{n}{2s}}(\Omega)}$:
    \begin{lemma}\label{step3}
    Assume that the conditions of Theorem \ref{thm: stability estimate partial I} hold. Then, 
    \[
    \|\gamma_1-\gamma_2\|_{H^{s}(\Omega)} \leq C' \|\gamma_1^{1/2}-\gamma_2^{1/2}\|_{H^{s}(\Omega)}   \leq C \norm{q_1-q_2}_{L^{\frac{n}{2s}}(\Omega)}.
    \]
    where the constant $C,C' >0$ depend on $\gamma_0, C_1$ and $\Omega$.  
    \end{lemma}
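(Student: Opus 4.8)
The plan is to prove the two inequalities separately. The first is purely a multiplier estimate, while the second requires turning the difference of background deviations into the solution of a fractional Schrödinger equation and running an energy estimate against the source built from $q_1-q_2$.

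For the first inequality I would use the factorization $\gamma_1-\gamma_2 = (\gamma_1^{1/2}+\gamma_2^{1/2})(\gamma_1^{1/2}-\gamma_2^{1/2})$. Each $\gamma_j$ satisfies the hypotheses of Lemma \ref{lemmaeq10991} with $U=\Omega$, so the multiplication operators $\mathcal{M}^+_{\gamma_j}\colon w\mapsto \gamma_j^{1/2}w$ are bounded on $H^s(\Omega)$ with norms controlled by $\gamma_0$ and $C_1$. Since multiplication by $\gamma_1^{1/2}+\gamma_2^{1/2}$ is the sum $\mathcal{M}^+_{\gamma_1}+\mathcal{M}^+_{\gamma_2}$, it is bounded on $H^s(\Omega)$, and hence $\|\gamma_1-\gamma_2\|_{H^s(\Omega)} \leq C'\|\gamma_1^{1/2}-\gamma_2^{1/2}\|_{H^s(\Omega)}$ with $C'$ depending only on $\gamma_0$, $C_1$ and $\Omega$. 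This disposes of the first inequality with essentially no further work.

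For the second inequality I would set $w := \gamma_1^{1/2}-\gamma_2^{1/2} = m_1-m_2$. From $q_j = -(-\Delta)^s m_j/\gamma_j^{1/2}$, that is $(-\Delta)^s m_j = -\gamma_j^{1/2}q_j$, subtracting the two identities and adding and subtracting $\gamma_1^{1/2}q_2$ yields the global identity $((-\Delta)^s+q_2)w = -\gamma_1^{1/2}(q_1-q_2)$. By hypothesis \ref{item: agree on Ext} of Theorem \ref{thm: stability estimate partial I} we have $\gamma_1=\gamma_2$ in $\Omega_e$, so $w$ is supported in $\overline\Omega$; together with the regularity of $m_1,m_2$ and the bound $s\leq n/2$ this gives $w\in\widetilde{H}^s(\Omega)$. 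Testing the identity against $w$ then gives $B_{q_2}(w,w) = -\langle \gamma_1^{1/2}(q_1-q_2),w\rangle$.

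The final step is to estimate both sides. On the left, the Liouville reduction (Lemma \ref{lemma: Liouville reduction}) rewrites $B_{q_2}(w,w) = B_{\gamma_2}(\gamma_2^{-1/2}w,\gamma_2^{-1/2}w)\geq \gamma_0\|(-\Delta)^{s/2}(\gamma_2^{-1/2}w)\|_{L^2(\R^n)}^2$; since $\gamma_2^{-1/2}w\in\widetilde{H}^s(\Omega)$ by Lemma \ref{lemmaeq10993} and $\Omega$ is bounded, the fractional Poincaré inequality together with Lemma \ref{lemmaeq10991} gives $B_{q_2}(w,w)\gtrsim \|w\|_{H^s(\R^n)}^2$. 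On the right, because $w$ vanishes outside $\Omega$ and $\gamma_1^{1/2}\leq\gamma_0^{-1/2}$, Hölder's inequality with the conjugate exponents $\frac{n}{2s}$ and $\frac{n}{n-2s}$ gives $|\langle \gamma_1^{1/2}(q_1-q_2),w\rangle|\leq \gamma_0^{-1/2}\|q_1-q_2\|_{L^{\frac{n}{2s}}(\Omega)}\|w\|_{L^{\frac{n}{n-2s}}(\Omega)}$, and the Sobolev embedding $H^s(\R^n)\hookrightarrow L^{\frac{2n}{n-2s}}(\R^n)$ (valid since $0<s<n/2$) with the boundedness of $\Omega$ yields $\|w\|_{L^{\frac{n}{n-2s}}(\Omega)}\lesssim \|w\|_{H^s(\R^n)}$. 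Combining and dividing by $\|w\|_{H^s(\R^n)}$, and using $\|w\|_{H^s(\Omega)}\leq\|w\|_{H^s(\R^n)}$, gives $\|\gamma_1^{1/2}-\gamma_2^{1/2}\|_{H^s(\Omega)}\lesssim \|q_1-q_2\|_{L^{\frac{n}{2s}}(\Omega)}$. I expect the main obstacle to be the coercivity of $B_{q_2}$ on $\widetilde{H}^s(\Omega)$: the potential $q_2$ need not be sign-definite, so positivity of the quadratic form is not immediate and must be recovered from the conductivity structure through the Liouville reduction and the fractional Poincaré inequality, which is exactly where hypotheses \ref{item: main assumption 1_first}--\ref{item: main assumption 2I0} and Lemmas \ref{lemmaeq10991}--\ref{lemmaeq10993} enter.
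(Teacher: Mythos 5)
Your proposal is correct, and for the second inequality it takes a genuinely different, more self-contained route than the paper. The first inequality is handled identically (factorization plus Lemma~\ref{lemmaeq10991}). For the second, the paper conjugates to $v:=\gamma_1^{-1/2}(\gamma_1^{1/2}-\gamma_2^{1/2})$ and cites \cite[Lemma 4.4]{doi:10.1137/22M1533542}, by which $v$ solves the source problem $\Div_s(\Theta_{\gamma_1}\nabla^s v)=F$ in $\Omega$ with $v=0$ in $\Omega_e$ and $F=\gamma_1^{1/2}\gamma_2^{1/2}(q_1-q_2)$; a black-box elliptic estimate gives $\|v\|_{H^s(\Omega)}\leq C\|F\|_{H^{-s}(\Omega)}$, and the remaining duality/H\"older/Sobolev step ($\|q_1-q_2\|_{H^{-s}(\Omega)}\lesssim\|q_1-q_2\|_{L^{n/2s}(\Omega)}$) and the return to $\gamma_1^{1/2}-\gamma_2^{1/2}$ via the multiplier lemmas coincide with yours. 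You instead keep $w=m_1-m_2$ itself, derive the global Schr\"odinger-form identity $((-\Delta)^s+q_2)w=-\gamma_1^{1/2}(q_1-q_2)$ directly from the definition of the $q_j$, and prove the needed coercivity by hand: $B_{q_2}(w,w)=B_{\gamma_2}(\gamma_2^{-1/2}w,\gamma_2^{-1/2}w)\gtrsim\|w\|_{H^s(\R^n)}^2$ via the Liouville reduction, the pointwise bound $\gamma_2\geq\gamma_0$, the fractional Poincar\'e inequality on the bounded set, and Lemmas~\ref{lemmaeq10991}--\ref{lemmaeq10993}. In effect you re-prove, in the special case at hand, exactly the elliptic estimate the paper imports; what your version buys is independence from the external Lemma 4.4 (the argument becomes self-contained modulo Poincar\'e), while the paper's version is shorter given the citation and keeps the source problem in conductivity form. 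Your sign convention $q_j=-\gamma_j^{-1/2}(-\Delta)^s m_j$ matches Section 3 of the paper (the statement of Lemma~\ref{lemma: Liouville reduction} in the preliminaries carries the opposite sign, an internal inconsistency of the paper, not of your argument), and with it your quadratic-form identity is the correct one.

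One step you should make precise is the one-line claim that $w\in\widetilde{H}^s(\Omega)$: the hypothesis $m_j\in H^{2s+\varepsilon,\frac{n}{s}}(\R^n)$ does not place $m_j$ in $H^s(\R^n)$ globally, so the compact support must be used explicitly. The paper's cutoff device does this cleanly: pick $\chi\in C_c^\infty(\R^n)$ with $\chi=1$ near $\overline\Omega$, so that $w=\gamma_1^{1/2}\chi-\gamma_2^{1/2}\chi\in H^s(\R^n)$ by Lemma~\ref{lemmaeq10991}; then $w\in H^s(\R^n)$ with $\supp w\subset\overline\Omega$ lies in $\widetilde{H}^s(\Omega)$ because $\Omega$ is smooth (or, avoiding boundary regularity altogether, mollify and run your energy argument in a slightly larger bounded open set $\Omega'\supset\overline\Omega$, where Lemma~\ref{lemmaeq10993} and the Poincar\'e inequality apply equally well). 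With this patched, your estimate chain --- H\"older with exponents $\frac{n}{2s}$ and $\frac{n}{n-2s}$, the embedding $H^s(\R^n)\hookrightarrow L^{\frac{2n}{n-2s}}(\R^n)$, division by $\|w\|_{H^s(\R^n)}$ --- is correct and yields constants depending only on $\gamma_0$, $C_1$ and $\Omega$, as the lemma requires.
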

    
\begin{proof} 
We may write
\begin{equation}
    \gamma_1-\gamma_2 = (\gamma_1^{1/2}+\gamma_2^{1/2})(\gamma_1^{1/2}-\gamma_2^{1/2}).
\end{equation} We then directly conclude from Lemma \ref{lemmaeq10991} that 
$$\|\gamma_1-\gamma_2\|_{H^{s}(\Omega)} \leq C' \|\gamma_1^{1/2}-\gamma_2^{1/2}\|_{H^{s}(\Omega)}$$
where $C'>0$ depend only on $\gamma_0$ and $C_1$. This proves the first inequality.

By assumptions, the support of $\gamma_1^{1/2}-\gamma_2^{1/2}$ lies in the compact set $\overline\Omega$. 
In particular, there is a smooth cutoff function $\chi \in C_c^\infty( \R^n)$ such that 
\[
\gamma_1^{1/2}-\gamma_2^{1/2} = (\gamma_1^{1/2}  -\gamma_2^{1/2}) \chi  = \gamma_1^{1/2} \chi   -\gamma_2^{1/2} \chi. 
\]
By Lemma \ref{lemmaeq10991}, 
\begin{equation}\label{kkao3}
\gamma_j^{1/2} \chi \in H^s(\R^n), \quad j=1,2. 
\end{equation} 
The function $v :=\gamma_1^{-1/2}(\gamma_1^{1/2}-\gamma_2^{1/2})$ belongs to $H^s(\R^n)$ by Lemma \ref{lemmaeq10991} and \eqref{kkao3}. 
Denote $F:=\gamma_{1}^{1/2}\gamma_{2}^{1/2}(q_1-q_2)  $. 
By \cite[Lemma 4.4]{doi:10.1137/22M1533542}, $v$ solves 
    \begin{align}
       \text{div}_s ( \Theta_{\gamma_1} \nabla^s  v )  = F  \quad &
        \text{in $\Omega$},\\
        v =0\quad   &  \text{in $\Omega_e$} ,
    \end{align} 
   from which we get an elliptic estimate
   \[
   \| v \|_{H^s(\Omega)} \leq  C_4 \| F\|_{H^{-s}(\Omega)},
   \]
   where $C_4$ depends on $\gamma_0$. 
   Because $H^{-s}(\Omega) = (\widetilde{H}^{s}(\Omega))^*$, by duality and Lemma \ref{lemmaeq10991} we obtain 
   \begin{equation}
       \norm{F}_{H^{-s}(\Omega)} \leq C_3 \norm{q_1-q_2}_{H^{-s}(\Omega)} ,
   \end{equation}
   where $C_3$ depends on $\gamma_0, C_1$. 
    Combining the last two inequalities,  we obtain
   \begin{equation}\label{vequation}
      \| v \|_{H^s(\Omega)} \leq C_2 \norm{q_1-q_2}_{H^{-s}(\Omega)} ,
   \end{equation} 
   where $C_2$ depends on $\gamma_0, C_1$. 
By H\"older's inequality and the Sobolev embedding theorem, for all $w\in \widetilde{H}^s( \Omega)$ it holds that  
\[
\| w \|_{L^{\frac{n}{n-2s} } (\Omega)} \leq C_\Omega \| w \|_{L^{{\frac{2n}{n-2s}  }} (\Omega)} \leq C_\Omega \| w \|_{ \widetilde{H}^s( \Omega) }.
\]
Passing to the associated dual spaces, for all $\psi \in L^{\frac{n}{2s}  }(\Omega)$ we have 
\[
 \| \psi \|_{ H^{-s}( \Omega) } =  \| \psi \|_{ (\widetilde{H}^s( \Omega))^* } \leq C'_\Omega  \| \psi \|_{ \big( L^{{\frac{n}{n-2s} }  } (\Omega) \big)^*} = C'_\Omega \| \psi \|_{L^{{\frac{n}{2s}  }} (\Omega)}.
 \]
In particular, we deduce that 
   \[
 \norm{q_1-q_2}_{H^{-s}(\Omega)}  \leq C'_\Omega \norm{q_1-q_2}_{L^{\frac{n}{2s}}(\Omega)}, 
   \]
and combining this inequality with \eqref{vequation} yields
\begin{equation}\label{wie}
  \| v \|_{H^s(\Omega)} \leq C_2 C'_\Omega  \norm{q_1-q_2}_{L^{\frac{n}{2s}}(\Omega)}.
\end{equation}
However, becuse  
$\gamma_1^{1/2} - \gamma_2^{1/2} =  \gamma_1^{1/2}\chi -  \gamma_2^{1/2}  \chi \in H^s (\R^n)$, we have
\[
 \| \gamma_1^{1/2} - \gamma_2^{1/2}  \|_{H^s(\Omega)}  \leq C'  \| \gamma_1^{-1/2} ( \gamma_1^{1/2} - \gamma_2^{1/2}  ) \|_{H^s(\Omega)} = C'  \| v \|_{H^s(\Omega)} 
\]
by Lemma \ref{lemmaeq10991}, where $C'$ depends on $\gamma_0$ and $C_1$. 
Combining this with \eqref{wie} completes the proof. 
\end{proof}

With the above discussion in mind, it is now easy to prove Theorem \ref{thm: stability estimate partial I}.
   
\begin{proof}[Proof of Theorem \ref{thm: stability estimate partial I}]
The claim follows by combining Proposition \ref{step2} and Lemma \ref{step3}. 
\end{proof}

\section{Partial data problem II}
We now turn to the proof of Theorem \ref{thm: stability estimate partial II}, for which we will apply the two following propositions:

\begin{proposition}[{\cite[Proposition 5.1]{doi:10.1137/22M1533542}}]
\label{thm: partial data reduction}
    Let $0<s<\min(1,n/2)$, $s/n < \theta_0 < 1$ and $\varepsilon > 0$.
    Let $\Omega\subset \R^n$ be a non-empty open set bounded in one direction. Suppose that the conductivities $\gamma_1,\gamma_2\in L^{\infty}(\R^n)$ with background deviations $m_1,m_2$ and potentials $q_1,q_2$ satisfy the following conditions:
    \begin{enumerate}[(i)]
        \item\label{item: partial assumption 1} $\gamma_0\leq \gamma_1(x),\gamma_2(x)\leq \gamma_0^{-1}$ for some $0<\gamma_0<1$,
        \item\label{item: partial assumption 3}
       there exists $C_0>0$ such that
        \begin{equation}
        \label{eq: partial difference bound}
            \|m_i\|_{H^{\frac{2s+\varepsilon}{\theta_0},\frac{\theta_0 n}{s}}(\R^n)}\leq C_0
        \end{equation}
         for $i=1,2$.
    \end{enumerate} 
    Let $W_1,W_2,W \Subset \Omega_e$ be non-empty open sets such that $W_1 \cup W_2 \Subset W$. 
    There exists a constant $C>0$ depending only on $s,\varepsilon,n,\Omega, C_0, \theta_0, W_1, W_2, W$ and $\gamma_0$ such that
    \begin{equation}
    \label{eq: partial reduction}
\|\Lambda_{q_1}-\Lambda_{q_2}\|_{W_1,W_2}\leq C(\|\Lambda_{\gamma_1}-\Lambda_{\gamma_2}\|_{W}+\|\Lambda_{\gamma_1}-\Lambda_{\gamma_2}\|_{W}^{\frac{1-\theta_0}{2}}).
    \end{equation}
\end{proposition}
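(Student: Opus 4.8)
The plan is to reduce \eqref{eq: partial reduction} to the algebraic relation between the two DN maps supplied by the fractional Liouville reduction (Lemma \ref{lemma: Liouville reduction}), exactly as in the proof of Lemma \ref{step1}, and then to isolate and estimate the ``exterior mismatch'' terms which, unlike in Lemma \ref{step1}, no longer vanish because we do not assume $\gamma_1=\gamma_2$ on $\Omega_e$. I first note that assumption \ref{item: partial assumption 3} implies, via the Sobolev embedding $H^{\frac{2s+\varepsilon}{\theta_0},\frac{\theta_0 n}{s}}(\R^n)\hookrightarrow H^{2s+\varepsilon,\frac{n}{s}}(\R^n)$ (valid since $\theta_0\le 1$), that the background deviations satisfy the hypotheses of Lemmas \ref{lemmaeq10991} and \ref{lemmaeq10993}; hence multiplication by $\gamma_i^{\pm 1/2}$ is a homeomorphism of $\widetilde H^s(U)$ with norm controlled by $\gamma_0,C_0$, and each $\Lambda_{\gamma_i}$ is bounded by Lemma \ref{lemm123}.

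Writing $P_i:=\gamma_i^{-1/2}$ and using the identity $\langle \Lambda_{q_i} f, g\rangle=\langle \Lambda_{\gamma_i}(P_i f),P_i g\rangle$ from the Liouville reduction, I would decompose, for $f\in C_c^\infty(W_1)$ and $g\in C_c^\infty(W_2)$,
\[
\begin{aligned}
\langle (\Lambda_{q_1}-\Lambda_{q_2}) f, g\rangle &= \langle (\Lambda_{\gamma_1}-\Lambda_{\gamma_2})(P_1 f), P_1 g\rangle \\
&\quad + \langle \Lambda_{\gamma_2}(P_1 f), P_1 g\rangle - \langle \Lambda_{\gamma_2}(P_2 f), P_2 g\rangle.
\end{aligned}
\]
Setting $a:=P_1-P_2=\gamma_1^{-1/2}-\gamma_2^{-1/2}$ and expanding the bracket with $P_1=P_2+a$, the last two terms collapse to the remainder
\[
\langle \Lambda_{\gamma_2}(P_1 f), a g\rangle + \langle \Lambda_{\gamma_2}(a f), P_2 g\rangle .
\]
The main term is immediate: since $W_1\cup W_2\Subset W$, the functions $P_1f,P_1g$ lie in $\widetilde H^s(W)$ with norms $\lesssim \|f\|_{H^s},\|g\|_{H^s}$ by Lemma \ref{lemmaeq10993}, so it is bounded by $C\|\Lambda_{\gamma_1}-\Lambda_{\gamma_2}\|_{W}\,\|f\|_{H^s}\|g\|_{H^s}$, which after taking the supremum over normalized $f,g$ contributes the linear term in \eqref{eq: partial reduction}.

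The heart of the matter is the remainder. Its two summands are pairings of the single bounded self-adjoint operator $\Lambda_{\gamma_2}$ against functions \emph{all supported in $W\Subset\Omega_e$} (multiplication preserves supports, and $f,g$ live in $W_1,W_2\subset W$). Each is therefore bounded by $\|\Lambda_{\gamma_2}\|_{\mathrm{op}}$ times $\widetilde H^s(W)$-norms of $P_1f,P_2g$ (controlled by Lemma \ref{lemmaeq10993}) and of $af,ag$. The decisive quantity is thus the norm of multiplication by $a=\gamma_1^{-1/2}-\gamma_2^{-1/2}$ on functions supported in $W$, and since $a$ depends only on the values of $\gamma_1,\gamma_2$ in the measurement region $W$, the whole proposition reduces to the quantitative \emph{exterior determination} estimate
\[
\|a\,\cdot\|_{\widetilde H^s(W)\to H^s(\R^n)} \leq C\,\|\Lambda_{\gamma_1}-\Lambda_{\gamma_2}\|_{W}^{\frac{1-\theta_0}{2}} .
\]

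This last step is the main obstacle, and it is where the parameter $\theta_0$ and the a priori bound \ref{item: partial assumption 3} enter. I would prove it by testing $\Lambda_{\gamma_1}-\Lambda_{\gamma_2}$ against families of functions concentrating at a small scale $r$ around points of $W$: because $W$ is disjoint from $\overline\Omega$, the interior correction $u_h-h$ of the solution contributes only a lower-order term relative to the local singular contribution, so such test functions recover the local values of $\gamma_1-\gamma_2$ in $W$ with an error comprising a factor $\|\Lambda_{\gamma_1}-\Lambda_{\gamma_2}\|_{W}$ amplified by a negative power of $r$, plus a truncation error governed by the high-order smoothness $\|m_i\|_{H^{\frac{2s+\varepsilon}{\theta_0},\frac{\theta_0 n}{s}}}\leq C_0$. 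Optimizing in $r$ — equivalently, interpolating the $H^s$-multiplier norm of $a$ between the rough bound extracted directly from the DN-map difference and the high-regularity a priori bound, with interpolation weight $\theta_0$ and the exponent pair $\big(\tfrac{2s+\varepsilon}{\theta_0},\tfrac{\theta_0 n}{s}\big)$ tuned so the interpolation lands precisely on the scale $H^s$ — produces the fractional power $\tfrac{1-\theta_0}{2}$. Combining the linear bound for the main term with this estimate for the remainder and taking the supremum over $\|f\|_{H^s}=\|g\|_{H^s}=1$ yields \eqref{eq: partial reduction}; the careful bookkeeping of the concentrating-family/interpolation step is where the real work lies.
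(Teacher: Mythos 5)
A preliminary remark: the paper never proves this proposition at all --- it is imported verbatim from \cite[Proposition 5.1]{doi:10.1137/22M1533542} --- so your attempt has to be measured against the proof given in that reference, which indeed begins exactly as yours does. Your opening reduction is correct and matches it: the Liouville-reduction identity $\langle \Lambda_{q_i}f,g\rangle=\langle \Lambda_{\gamma_i}(\gamma_i^{-1/2}f),\gamma_i^{-1/2}g\rangle$, the decomposition with $a=\gamma_1^{-1/2}-\gamma_2^{-1/2}$ (the algebra checks out), the linear term coming from $P_1f\in\widetilde H^s(W_1)\subset\widetilde H^s(W)$ via Lemma \ref{lemmaeq10993}, and the correct diagnosis that everything reduces to controlling multiplication by $a$ on functions supported in $W_1\cup W_2$. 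The embedding $H^{\frac{2s+\varepsilon}{\theta_0},\frac{\theta_0 n}{s}}(\R^n)\hookrightarrow H^{2s+\varepsilon,\frac{n}{s}}(\R^n)$ you invoke is also valid.

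The genuine gap is the claimed estimate $\|a\,\cdot\,\|_{\widetilde H^s(W_i)\to H^s(\R^n)}\lesssim\|\Lambda_{\gamma_1}-\Lambda_{\gamma_2}\|_{W}^{\frac{1-\theta_0}{2}}$: this is the entire content of the proposition beyond routine algebra, and your concentrating-family/interpolation sketch does not prove it --- as you concede yourself. In \cite{doi:10.1137/22M1533542} it is assembled from three separately proved pieces, and your sketch conflates them. First, a quantitative \emph{exterior determination} estimate of the form $\|m_1-m_2\|_{L^\infty(K)}\lesssim \|\Lambda_{\gamma_1}-\Lambda_{\gamma_2}\|_{W}+\|\Lambda_{\gamma_1}-\Lambda_{\gamma_2}\|_{W}^{1/2}$ on compact subsets $K\subset W$, obtained by testing the quadratic forms against Ghosh--Salo--Uhlmann-type concentrating sequences supported in $W$ and using an energy/positivity argument to dispose of the interior correction $u_f-f$; this step alone produces the exponent $1/2$ and is where the real analytic work lies. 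Second, a Gagliardo--Nirenberg interpolation $\|u\|_{H^{2s+\varepsilon,\frac{n}{s}}(\R^n)}\lesssim\|u\|_{L^\infty(\R^n)}^{1-\theta_0}\|u\|_{H^{\frac{2s+\varepsilon}{\theta_0},\frac{\theta_0 n}{s}}(\R^n)}^{\theta_0}$, which together with the a priori bound \eqref{eq: partial difference bound} converts the $L^\infty$ smallness into smallness of a cutoff of $m_1-m_2$ in $H^{2s+\varepsilon,\frac{n}{s}}$; this, not an optimization over concentration scales $r$, is where $1-\theta_0$ enters, and the exponents in assumption (ii) are tuned so that the interpolation lands on the multiplier space $H^{2s+\varepsilon,\frac{n}{s}}$ for $H^s$ (cf.\ Lemma \ref{lemmaeq10991}), not ``on the scale $H^s$'' as you write. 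Third, a product estimate deducing the $\widetilde H^s(W_i)\to H^s(\R^n)$ multiplier bound from the second step, where the hypothesis $W_1\cup W_2\Subset W$ is needed to choose $\chi\in C_c^\infty(W)$ with $\chi\equiv 1$ near $\overline{W_1\cup W_2}$, because the exterior determination estimate is only available on compact subsets of $W$. None of these is carried out in your proposal, and your single ``optimize in $r$'' heuristic cannot by itself produce the exponent $\frac{1-\theta_0}{2}$, since $1/2$ and $1-\theta_0$ arise from two independent mechanisms. As written, the proof is incomplete precisely at the proposition's central step.
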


\begin{proposition}[{\cite[Proposition 6.1]{GRSU-fractional-calderon-single-measurement}}]\label{prop:GRSU-single-quantitative-ucp} Let $n \geq 1$, $0 < s < 1$ and $s' < s$. Let $\Omega, W \subset \R^n$ be open bounded Lipschitz domains with $\overline{\Omega}\cap \overline{W} = \emptyset$. Then there exist constants $C, \sigma > 0$ only depending on $\Omega, W, n, s, s'$ such that for any $E > 0$ and $v \in H_{\overline{\Omega}}^s$ with $\norm{v}_{H_{\overline{\Omega}}^s} \leq E$ it holds that
\begin{equation}
\norm{v}_{H_{\overline{\Omega}}^{s'}} \leq CE \log\left(\frac{CE}{\norm{(-\Delta)^s v}_{H^{-s}(W)}}\right)^{-\sigma}.
\end{equation}
\end{proposition}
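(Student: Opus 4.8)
The plan is to establish this quantitative unique continuation estimate through the Caffarelli--Silvestre extension, a Carleman-based propagation of smallness, and a concluding interpolation. The qualitative backbone is the strong unique continuation property of $(-\Delta)^s$: since $\supp v \subset \overline{\Omega}$ and $\overline{\Omega}\cap\overline{W}=\emptyset$, the trace of $v$ vanishes identically on $W$, so the only nontrivial Cauchy datum of $v$ on $W$ is its generalized Neumann datum, which coincides with $(-\Delta)^s v|_W$. The content of the proposition is to upgrade the implication ``this datum vanishes $\Rightarrow v\equiv 0$'' into a stability statement with an explicit modulus.

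First I would pass to the $s$-harmonic extension $\tilde v$ of $v$ to the half-space $\R^{n+1}_+$, solving the degenerate elliptic equation $\nabla\cdot(x_{n+1}^{1-2s}\nabla \tilde v)=0$ with $\tilde v(\cdot,0)=v$ and weighted conormal derivative $-c_s\lim_{x_{n+1}\to 0^+}x_{n+1}^{1-2s}\partial_{n+1}\tilde v=(-\Delta)^s v$. The bound $\norm{v}_{H_{\overline{\Omega}}^s}\le E$ controls the weighted Dirichlet energy of $\tilde v$ on a fixed half-ball, while the hypothesis makes both pieces of Cauchy data small on $W\times\{0\}$: the trace vanishes there and the conormal derivative is controlled by $\eta:=\norm{(-\Delta)^s v}_{H^{-s}(W)}$.

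Next I would run a quantitative propagation of smallness for this degenerate operator. Boundary and interior three-ball (doubling) inequalities, obtained from Carleman estimates adapted to the weight $x_{n+1}^{1-2s}$, allow the smallness of the Cauchy data on $W\times\{0\}$ to be transported along a finite chain of balls connecting $W$ to a fixed neighbourhood of $\overline{\Omega}$ in $\R^{n+1}_+$. Each three-ball step interpolates H\"older-wise between $\eta$ and the a priori energy $E$; iterating across the chain and optimizing the free parameter in the Carleman weight converts this into a logarithmic estimate of the form $\norm{v}_{H^{-N}(\R^n)}\le CE\,(\log(CE/\eta))^{-\sigma_0}$ for a suitable large $N$ and exponent $\sigma_0>0$ depending only on the admissible data. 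The logarithmic (rather than H\"older) dependence is the characteristic loss coming from the Cauchy nature of the data together with the boundary degeneracy.

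Finally I would interpolate. Choosing $\theta\in(0,1)$ with $s'=\theta s-(1-\theta)N$, the standard Sobolev interpolation gives $\norm{v}_{H^{s'}}\le\norm{v}_{H^s}^{\theta}\norm{v}_{H^{-N}}^{1-\theta}\le E^{\theta}\bigl(CE(\log(CE/\eta))^{-\sigma_0}\bigr)^{1-\theta}$; the powers of $E$ recombine to a single linear factor, and because $\supp v\subset\overline{\Omega}$ the ambient and $\overline{\Omega}$-supported norms are equivalent. Relabelling constants and setting $\sigma:=(1-\theta)\sigma_0$ yields the claim. I expect the propagation-of-smallness step to be the main obstacle: the boundary Carleman estimates for the characteristic, degenerate weight $x_{n+1}^{1-2s}$, the handling of the weighted Sobolev norms near $\{x_{n+1}=0\}$, and the careful bookkeeping of constants through the chain so that the final dependence is only on $\Omega,W,n,s,s'$, are precisely where the logarithmic modulus and its exponent $\sigma$ are generated.
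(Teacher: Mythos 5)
This proposition is imported in the paper verbatim from \cite{GRSU-fractional-calderon-single-measurement} without proof, so the relevant comparison is with the original source, where the estimate is indeed proved exactly along your lines: pass to the Caffarelli--Silvestre extension with vanishing Dirichlet and $H^{-s}(W)$-small Neumann data on $W\times\{0\}$, propagate smallness via Carleman-based boundary and interior three-ball (doubling) inequalities for the degenerate weight $x_{n+1}^{1-2s}$ to get a logarithmic bound in a weak norm, and interpolate against the a priori bound $E$ using $\supp v\subset\overline{\Omega}$ to identify the supported and ambient norms. Your outline is correct and follows essentially the same approach (with the minor cosmetic difference that you route the weak endpoint through $H^{-N}(\R^n)$ rather than a bulk estimate on the extension), with the genuinely hard Carleman machinery correctly identified as the technical core rather than glossed over.
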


\begin{proof}[Proof of Theorem \ref{thm: stability estimate partial II}]
From here onwards, we shall write $a \lesssim b$ if $a \leq c b $ holds for some constant $c>0$ that may depend on $s,p,W,\Sigma,\varepsilon,n,\Omega, C_1$,$\gamma_0$. 
Let $W' \subset W$ be a nonempty, bounded, Lipschitz domain such that $\overline{W}' \subset W$. 

By Proposition \ref{prop:GRSU-single-quantitative-ucp},  we have for $v:= \gamma_1^{1/2}-\gamma_2^{1/2}$ that 
\begin{equation}
\norm{v}_{H_{\overline{\Sigma'}}^{s'}} \leq CE \log\left(\frac{CE}{\norm{(-\Delta)^sv}_{H^{-s}(W')}}\right)^{-\sigma},
\end{equation}
where $\Sigma'\supset\Sigma$ is open, bounded, Lipschitz and such that $\overline{\Sigma'\cap W} =\emptyset$. Here $E>0$ is chosen such that $\norm{v}_{H_{\overline{\Sigma'}}^{s}} <E$, which is allowed by assumption (iii). By the support assumptions and Sobolev embedding theorem, we then have
\begin{equation}
\|v\|_{L^{\frac{2n}{n-2s'}}(\R^n)} =  \|v\|_{L^{\frac{2n}{n-2s'}}(\Sigma')} \lesssim \norm{v}_{H_{\overline{\Sigma'}}^{s'}} \leq CE \log\left(\frac{CE}{\norm{(-\Delta)^sv}_{H^{-s}(W')}}\right)^{-\sigma},
\end{equation}
and since in $W'$ it holds that $\gamma_1=\gamma_2$, we can compute \begin{equation}\begin{split}
\norm{(-\Delta)^s v}_{H^{-s}(W')} & = \norm{\gamma_1^{1/2} q_1 - \gamma_2^{1/2}q_2}_{H^{-s}(W')} 
\\ & \leq
\norm{(\gamma_1^{1/2} - \gamma_2^{1/2})q_1}_{H^{-s}(W')} + \norm{\gamma_2^{1/2} (q_1 - q_2)}_{H^{-s}(W')}
\\ & \lesssim
\norm{(\gamma_1^{1/2} - \gamma_2^{1/2})q_1}_{L^{2}(W')} + \norm{q_1 - q_2}_{H^{-s}(W')}
\\ & =
\norm{q_1 - q_2}_{H^{-s}(W')},
\end{split}\end{equation} 
where Lemma \ref{lemmaeq10993} was used in the last inequality. 
This  leaves us with 
$$\|v\|_{L^{p}(\R^n)} \lesssim CE \log\left(\frac{CE}{\norm{q_1-q_2}_{H^{-s}(W')}}\right)^{-\sigma},$$
where $p = \frac{2n}{n-2s'}$. The inequality $s'<s$ gives the upper bound $p < \frac{2n}{n-2s}$. The lower bound $1\leq p$ is needed for $\| \cdot \|_{L^p}$ to be a norm. We may use an argument similar to the proof of the first inequality in Lemma \ref{step3} to pass from the difference of square roots $v$ to the difference of conductivities $\gamma_1-\gamma_2$. To complete the proof, we thus only need to show that there exists some logarithmic modulus of continuity $\omega$ such that
\begin{equation}
\norm{q_1-q_2}_{H^{-s}(W')} \leq \omega(\norm{\Lambda_{\gamma_1}-\Lambda_{\gamma_2}}_W).
\end{equation}

Let $u_i\in H^s(\R^n)$ solve the problem $((-\Delta)^s+q_i)u_i=0$ in $\Omega$ with $u_i|_{\Omega_e}=f_i$, where $f_i \in C_c^\infty(W)$ and $i=1,2$. By the Alessandrini identity,
\begin{equation}\label{eq: splitting formula}\begin{split}
\int_{W} (q_1-q_2)f_1f_2dx & = \int_{\R^n}(q_1-q_2)u_1u_2dx - \int_\Omega (q_1-q_2)u_1u_2dx
\\ & = \ip{(\Lambda_{q_1}-\Lambda_{q_2})f_1}{f_2} - \int_\Omega (q_1-q_2)u_1u_2dx,
\end{split}\end{equation}
and thus we can estimate
\begin{align}
\abs{\int_{W} (q_1-q_2)f_1f_2dx} &\leq \norm{\Lambda_{q_1}-\Lambda_{q_2}}_{\widetilde{H}^s(W) \to (\widetilde{H}^s(W))^*} \norm{f_1}_{H^s(\R^n)}\norm{f_2}_{H^s(\R^n)} \\&\quad\,\,+ \abs{\int_\Omega (q_1-q_2)u_1u_2dx}.
\end{align}
Repeating the argument in the proof of Proposition \ref{step2}, we deduce that $\| q_j \|_{H^{\delta, n/2s} (\Omega) } $ is finite for some $\delta >0$. 
For the last term above, we obtain  using duality and the elliptic estimate that 
\begin{equation}\begin{split}
    \abs{\int_\Omega (q_1-q_2)u_1u_2dx} 
     & \lesssim
    \norm{q_1-q_2}_{L^{\frac{n}{2s}}(\Omega)} \norm{f_1}_{H^s(\R^n)}\norm{f_2}_{H^s(\R^n)}.
\end{split}
\end{equation} 
Applying Theorem \ref{thm: ruland salo proposition} and Proposition \ref{thm: partial data reduction}, we get
\begin{equation}\begin{split}
\abs{\int_{W} (q_1-q_2)f_1f_2dx} & \lesssim \omega_1(\| \Lambda_{q_1} - \Lambda_{q_2} \|_{W})\norm{f_1}_{H^s(\R^n)}\norm{f_2}_{H^s(\R^n)}
\\ & \leq
\omega_1(\omega_2(\| \Lambda_{\gamma_1} - \Lambda_{\gamma_2} \|_{W}))\norm{f_1}_{H^s(\R^n)}\norm{f_2}_{H^s(\R^n)},
    \end{split}\end{equation} 
where $\omega_1(t) := t+\omega_0(t)$, $\omega_2(t):= t+t^{\frac{1-\theta_0}{2}}$, and $\omega_0$ is the logarithmic modulus of continuity given by Theorem \ref{thm: ruland salo proposition}. Consequently, 
\begin{equation}
\norm{q_1-q_2}_{M( \widetilde{H}^s (W) \to H^{-s} (W) )} \leq \widetilde\omega(\norm{\Lambda_{\gamma_1}-\Lambda_{\gamma_2}}_W) \label{1992}
\end{equation}
for some logarithmic modulus of continuity $\widetilde\omega$. Here the Sobolev multiplier norm is defined as (see \cite[Ch. 3]{Mazya})
\[
\norm{f}_{M( \widetilde H^s(W) \to H^{-s} (W) )}  := \sup \{ | \langle f , u_1 u_2 \rangle  | :  \| u_j \|_{\widetilde H^s(W)} = 1, \ j=1,2  \}.
\]
Fix $\chi_0 \in  C^\infty_c (W)$ such that $\chi_0 |_{W'}=1 $ and define $\chi = \| \chi_0 \|_{H^s(\R^n) }^{-1} \chi_0$. 
Then, $\| \chi \|_{H^s(\R^n)} = 1 $ and $\phi = \chi_0 \phi = \| \chi_0 \|_{H^s(\R^n)} \chi \phi$  for every $\phi \in C_c^\infty(W')$. Also
\[
| \langle q_1 - q_2 , \phi \rangle | = \| \chi_0 \|_{H^s(\R^n)}   | \langle q_1 - q_2 , \chi   \phi   \rangle |   \lesssim  | \langle q_1 - q_2 , \chi   \phi   \rangle |, 
\]
which implies that 
\[
\| q_1 - q_2 \|_{ (\widetilde{H}^{s}(W'))^* } \lesssim \| q_1 - q_2 \|_{M( \widetilde{H}^s (W) \to H^{-s} (W) )}  . 
\] 
Alternatively, this could be derived using \cite[Lemma 2.2]{RS-fractional-calderon-low-regularity-stability}. 
As $(\widetilde{H}^{s}(W'))^* = H^{-s} (W')$, we obtain 
\[
\| q_1 - q_2 \|_{ H^{-s}(W') } \lesssim \| q_1 - q_2 \|_{M( \widetilde{H}^s (W) \to H^{-s} (W) )}  . 
\] 
Combining the estimate with \eqref{1992} then finishes the proof. 
\end{proof}

\bibliography{refs} 

\bibliographystyle{alpha}

\end{document}